\documentclass[11pt]{amsart}

\pdfoutput=1

\usepackage[utf8]{inputenc}
\usepackage[T1]{fontenc}
\usepackage{lmodern}

\usepackage{ulem,cancel}
\usepackage{amsthm, amssymb, amsmath, amsfonts, mathrsfs}

\usepackage{microtype}

\usepackage[pagebackref,colorlinks=true,pdfpagemode=none,urlcolor=blue,
linkcolor=blue,citecolor=blue]{hyperref}

\usepackage{amsmath,amsfonts,amssymb,amsthm}
\usepackage{amsthm, amssymb, amsmath, amsfonts, mathrsfs}

\usepackage{mathrsfs}
\usepackage{MnSymbol}
\usepackage{scalerel} 

\usepackage{color}
\usepackage{accents}

\usepackage{bbm}

\definecolor{labelkey}{gray}{.8}
\definecolor{refkey}{gray}{.8}

\definecolor{darkred}{rgb}{0.9,0.1,0.1}
\definecolor{darkgreen}{rgb}{0,0.5,0}


\setlength{\parskip}{4px}


\newtheorem{theorem}{Theorem}[section]
\newtheorem{lemma}[theorem]{Lemma}

\newtheorem{proposition}[theorem]{Proposition}

\theoremstyle{remark}
\newtheorem{remark}[theorem]{Remark}

\renewenvironment{proof}[1][Proof]{ {\itshape \noindent {#1.}} }{$\Box$
\medskip}

\numberwithin{equation}{section}
\newcommand{\R}{\mathbb{R}}

\newcommand{\Pb}{\mathbb{P}}
\newcommand{\PP}{\mathbf{P}}
\newcommand{\E}{\mathbb{E}}

\newcommand{\F}{\mathcal{F}}

\newcommand{\W}{\mathscr{W}}

\newcommand{\D}{\mathcal{D}}

\newcommand{\Var}{\mathrm{Var}}

\newcommand{\la}{\langle}
\newcommand{\ra}{\rangle}

\newcommand{\X}{\mathbf{X}}

\newcommand{\Y}{\mathbf{Y}}

\newcommand{\EE}{\mathbf{E}}
\newcommand{\1}{\mathbbm{1}}

\newcommand{\cH}{\mathcal{H}}

\newcommand{\cov}{\mathrm{Cov}}

\newcommand{\cZ}{\mathcal{Z}}

\newcommand{\cX}{\mathcal{X}}
\newcommand{\ssp}{\mathsf{p}}
\newcommand{\cD}{\mathscr{D}}

\begin{document}

\title[Variance of stationary KPZ]{Another look at the Bal\'azs-Quastel-Sepp\"al\"ainen theorem}
\author{Yu Gu and Tomasz Komorowski}

\address[Yu Gu]{Department of Mathematics, University of Maryland, College Park, MD 20742, USA}

\address[Tomasz Komorowski]{Institute of Mathematics, Polish Academy
  of Sciences, ul. \'{S}niadeckich 8, 00-656, Warsaw, Poland}

\maketitle

\begin{abstract}

We study the KPZ equation with a $1+1-$dimensional spacetime white noise, started
at equilibrium, and give a different proof of the main result of
\cite{bqs}, i.e., the variance of the solution at time $t$ is of order $t^{2/3}$. Instead of using a discrete approximation through the exclusion process and the second class particle, we utilize the connection to  directed polymers in random environment. Along the way, we show  the annealed density of the stationary continuum directed polymer equals to the two-point covariance function of the stationary stochastic Burgers equation, confirming the physics prediction in \cite{MT}.
\bigskip



\noindent \textsc{Keywords:} Directed polymer, KPZ equation, scaling relation.

\end{abstract}
\maketitle

\section{Main result}

Consider the stochastic heat equation (SHE) started from the exponential of a  drifted two-sided Brownian motion:
\begin{equation}\label{e.she}
\begin{split}
&\partial_t Z_\theta(t,x)=\frac12\Delta Z_\theta(t,x)+\xi(t,x)
Z_\theta(t,x) , \quad (t,x)\in(0,\infty)\times\R,\\
&Z_\theta(0,x)=e^{W_\theta(x)}.
\end{split}
\end{equation}
Here $W_\theta(x)=W(x)+\theta x$, and $W$ is  a two-sided Brownian
motion with $W(0)=0$ and $\theta\in\R$ is an arbitrary constant. 
The noise $\xi$ is a space-time white noise, i.e., it is a generalized
Gaussian random field with the covariance function $\EE\,
\xi(t,x)\xi(s,y)=\delta(t-s)\delta(x-y)$. Both the noise $\xi$ and the
Brownian motion $W$ are defined over some probability space
$(\Omega,{\mathcal F},\PP)$, with $\EE$ denoting the expectation. 

Define $h_\theta(t,x)=\log Z_\theta(t,x)$. The following is the main result:
\begin{theorem}\label{t.bqs}
There exists a constant $C>1$ such that 
\begin{equation}\label{e.maingoal}
C^{-1} t^{\frac{2}{3}} \leq  \Var \, h_0(t,0) \leq C t^{\frac{2}{3}}, \quad\quad \mbox{ for } t\geq1.
\end{equation}
\end{theorem}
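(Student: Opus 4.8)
The plan is to transport the Balázs–Quastel–Seppäläinen argument to the continuum, with the directed polymer in place of the exclusion process, the drift $\theta$ of the Brownian initial data in place of the particle density, and the polymer endpoint in place of the second class particle.

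\emph{Polymer set-up.} Write $Z_\theta(t,x)=\int_\R e^{W_\theta(y)}\mathcal{Z}(0,y;t,x)\,dy$ with $\mathcal{Z}$ the propagator of \eqref{e.she}, and let
\[
\rho^\theta_s(y)=\frac{Z_\theta(s,y)\,\mathcal{Z}(s,y;t,0)}{Z_\theta(t,0)},\qquad \int_\R\rho^\theta_s(y)\,dy=1,
\]
be the quenched law of the time-$s$ location of the polymer joining $(t,0)$ to the initial line, so $\rho^\theta_0$ is the endpoint law. Three structural inputs will be used: the Galilean shear $h_\theta(t,x)\stackrel{d}{=}h_0(t,x+\theta t)+\theta x+\tfrac12\theta^2t$, which moves between a general drift and $\theta=0$; the stationarity of the Brownian initial data, which gives $h_0(t,\cdot)-h_0(t,0)\stackrel{d}{=}W(\cdot)$ and hence $\E[h_0(t,x)]$ independent of $x$; and the elementary identities $\partial_\theta h_\theta(t,0)=\langle y\rangle^\theta$ and $\partial_\theta^2 h_\theta(t,0)=\langle y^2\rangle^\theta-(\langle y\rangle^\theta)^2\ge0$ for the $\theta$-tilted quenched endpoint moments, which combined with the shear give the \emph{diffusive anchor} $\E[\partial_\theta^2h_\theta(t,0)]=t$.

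\emph{Variance identity and decomposition.} Conditioning on $W$, $\Var h_0(t,0)=\Var\bigl(\E[h_0(t,0)\mid W]\bigr)+\E\bigl[\Var(h_0(t,0)\mid W)\bigr]$, and both terms I would compute by the Clark–Ocone representation of the variance. The bulk-noise term equals $\E\int_0^t\!\!\int_\R\bigl(\E[\rho^0_s(y)\mid\mathcal{F}_s]\bigr)^2dy\,ds$, since $D^\xi_{s,y}h_0(t,0)=\rho^0_s(y)$; the initial-data term is a quadratic functional of the (conditioned) endpoint tails $\int_y^\infty\rho^0_0(z)\,dz$, since $D^W_y h_0(t,0)$ is the signed endpoint tail. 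The initial-data term carries the $t^{2/3}$: $\E[h_0(t,0)\mid W]$ tracks $\max_y\bigl(W(y)-\tfrac{y^2}{2t}\bigr)$ up to errors, whose maximiser sits at the KPZ scale $y\asymp t^{2/3}$ where the value fluctuates at scale $t^{1/3}$; the bulk-noise term is governed by the spread of the conditioned polymer density and is controlled by heat-kernel and polymer-density estimates to be of the same or lower order. So \eqref{e.maingoal} reduces to a two-sided estimate of the endpoint spread.

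\emph{The two-point function.} Carrying out the $W$-side computation by Gaussian integration by parts identifies the annealed endpoint density $\bar\rho(y):=\E[\rho^0_0(y)]$ with the space–time two-point function $S(t,y)=\cov\bigl(\partial_x h_0(t,y),\partial_x h_0(0,0)\bigr)$ of the stationary stochastic Burgers equation — the identity announced in the abstract, confirming the prediction of \cite{MT}. Its conservation form gives $\int_\R S(t,y)\,dy=1$ for all $t$; reflection symmetry gives $\int_\R y\,S(t,y)\,dy=0$; and the diffusive anchor gives the exact identity $\int_\R y^2 S(t,y)\,dy=t+\Var\,\langle y\rangle^0$. Feeding this into the variance identity yields a BQS-type scaling relation, of the form $\Var h_0(t,0)\asymp\int_\R|y|\,S(t,y)\,dy$, so the theorem is equivalent to $\int_\R|y|\,S(t,y)\,dy\asymp t^{2/3}$: the variance is comparable to the $L^1$-width of $S$.

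\emph{The bootstrap, and the main obstacle.} To pin down the width I would run the self-improving argument of \cite{bqs}: couple $Z_\theta$ and $Z_{\theta'}$ driven by the same $\xi$ and $W$, so that the discrepancy between their endpoint laws is a ``second class'' object whose displacement is tied, via the diffusive anchor and the identity above, to $\Var\langle y\rangle^0$ and hence to the $L^2$-width of $S$; then, starting from the a priori bound $\Var h_0(t,0)\le Ct$ and comparing the endpoint law at time $t$ with that at earlier times, one obtains recursions of the form ``width at time $t$ $\lesssim$ weighted average of earlier widths'' with stable exponent $\tfrac23$, together with a matching lower recursion coming from the fact that an infinitesimal tilt of the drift must move a definite amount of endpoint mass. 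The analytic heart, where essentially all the work lies, is making this coupling quantitative — showing that an infinitesimal change of the Brownian drift shifts the annealed endpoint by an amount that is simultaneously $\gtrsim t^{2/3}$ and $\lesssim t^{2/3}$, the continuum counterpart of the delicate second-class-particle estimate of \cite{bqs}. The other point requiring care is the exact identity $\bar\rho=S(t,\cdot)$: its positivity, its normalisation, and its honest identification with the annealed endpoint law rather than a size-biased variant, since the whole comparison rests on it.
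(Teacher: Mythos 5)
Your scaffolding matches the paper's Section 2 in spirit: the Gaussian integration by parts on $W$ identifying the annealed endpoint density with the Burgers two-point function, the exact second-moment identity $\int y^2\,\EE\,\ssp(t,y)\,dy=t+\EE\big(\int y\,\ssp(t,y)\,dy\big)^2$ coming from $\partial_\theta^2\,\EE h_\theta(t,0)=t$, and the reduction of the theorem to a two-sided bound on the $L^1$-width of the annealed endpoint law are all correct and are indeed how the paper sets things up (the paper in fact proves the exact identity $\Var h_0(t,0)=\EE\int|y|\,\ssp(t,y)\,dy$, using stationarity of $\cH(t,\cdot)$ and the decay of $\cov[\cH(t,x),\cH(t,0)]$, rather than the conditional-variance/Clark--Ocone split you propose, which the paper uses only in the appendix for the covariance decay).

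However, there is a genuine gap: the two-sided estimate itself, i.e.\ that the endpoint width is simultaneously $\lesssim t^{2/3}$ and $\gtrsim t^{2/3}$, is exactly the content of the theorem, and your proposal does not prove it. You defer it to ``run the self-improving argument of \cite{bqs}'' via a coupling of $Z_\theta$ and $Z_{\theta'}$ and a recursion over earlier times, and you yourself flag that ``essentially all the work lies'' there. That step does not transfer for free: the BQS bootstrap is built on the second class particle of ASEP, which has no direct continuum analogue, and constructing a quantitative substitute is precisely what a proof must supply. The heuristic that $\E[h_0(t,0)\mid W]$ tracks $\max_y(W(y)-y^2/2t)$ at scale $t^{2/3}$ is a restatement of the KPZ scaling to be proven, so it cannot be used as input. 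The paper closes this gap without any recursion, working at the single time $t$: for the upper bound, convexity of $\theta\mapsto h_\theta(t,0)$ plus the exact mean $\EE h_\theta(t,0)=\EE h_0(t,0)+\tfrac12\theta^2 t$ and the shear bound $\sqrt{\Var h_\theta(t,0)}\le\sqrt{\Var h_0(t,0)}+\sqrt{|\theta|t}$ give, with $\delta=t^{-1/3}$, the self-bounding inequality $\Var h_0(t,0)\le C\big(t^{1/3}\sqrt{\Var h_0(t,0)}+t^{2/3}\big)$, hence $\Var h_0(t,0)\le Ct^{2/3}$; for the lower bound, with $\theta=\lambda t^{-1/3}$ and $n=Mt^{2/3}$, one bounds $\Pb_\theta^0(B_t>n)$ by Markov via the variance identity, and $\Pb_\theta^0(B_t\le n)$ by comparison with the initial condition $\tilde W_\theta$ carrying the drift only on $[0,n]$, an auxiliary independent exponential variable, Chebyshev, and a Girsanov change of measure with cost $e^{\theta^2 n/2}$; summing the two bounds forces $\liminf_{t\to\infty}t^{-2/3}\Var h_0(t,0)>0$. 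Without an argument of this kind (or a genuinely worked-out continuum coupling), your proposal establishes the reduction but not the theorem.
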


\subsection{Context}

The study of the KPZ equation with a $1+1$ spacetime white noise has
witnessed tremendous progress during the past decade. One of the main
achievements is to show that, under the 1:2:3 scaling and after a
centering, the solution converges in law to the KPZ fixed point, that is, the Markov process, which is expected to be the limit of all models in the 1+1 KPZ universality class, see \cite{QS20,Vir20,MQR20} for the related results and \cite{corwin2012kardar,Qua12,qs} for reviews and surveys in this area. Despite important progresses, many problems remain, in particular, how to extend the existing results to non-integrable models is of great interest. 

 Many studies on the KPZ equation rely on connections to discrete
 models, in particular the asymmetric simple exclusion process (ASEP), see e.g. \cite{BG97,bqs,SS10a,acq}. In this paper, we revisit an ``old'' problem:  it was shown in \cite{bqs} that the solution to the KPZ equation, started at equilibrium, has a $t^{1/3}$ size of fluctuations in large time (see \cite{corwinh} for the results on general initial data). The proof in \cite{bqs} relied on the study of the second class particle in ASEP. We provide a different proof here, through a connection to the directed polymer instead. As a crucial ingredient, we will derive a variance identity and show that the variance of the height function equals to the first moment of the endpoint of the continuum directed polymer at stationarity. This, combined with the result in \cite{bqs}, shows the two-point covariance function of the stochastic Burgers equation at stationarity actually coincides with the annealed density function of the endpoint of the directed polymer, see Remark~\ref{r.polymer}. This was conjectured in the physics literature \cite{MT}. Our proof is based on an integration by parts in the Gaussian space induced by the two-sided Brownian motion. Similar strategies have actually been adopted to study the KPZ fixed point started at equilibrium \cite{pimentel}. See the more recent development in \cite{pimentel1}.

Using the aforementioned variance identity, the study of the fluctuations of the height function reduces to that of the endpoint of the directed polymer. A few directed polymer models are shown to be in the $1+1$ KPZ
universality class, see
e.g. \cite{timo,timo1,timo2,acq,bcr,bc,bcf,BCFV15,haoshen,bc1,Vir20} for relevant 
results of proving the  scaling exponents, deriving the Tracy-Widom
type fluctuations etc. Our proof of the upper bound  is inspired by
the approach used to study the O'Connell-Yor polymer in \cite{timo1},
which was further explored in \cite{soso1} (see the recent study
on the interacting diffusions \cite{soso2} using a similar strategy). The key is to make use of
the convexity of the function $h_\theta(t,0)$ in the
$\theta-$variable, and the statistical invariance of the driving noise
under shear transformations, which leads to the quadratic form of the free energy $\EE h_\theta(t,0)$, as a function of $\theta$, see Proposition~\ref{l.meanhtheta} below.  For the lower bound, we apply a similar approach as \cite{bqs}, which was inspired by   \cite{timo3}, where a similar result for ASEP was derived. The main coupling argument used in \cite[Lemma 4.1]{timo3} was replaced by Lemma~\ref{l.keyl} below. An advantage of directly studying the SHE or KPZ equation is to  apply the comparison principle, namely, if we start the equation with ordered initial conditions and drive the equation by the same noise, then the solutions are also ordered. 

The main point here is to provide a somewhat different and simpler proof of the seminal results in \cite{bqs}. Although more precise information was obtained later, see e.g. \cite[Theorem 1.2]{BCFV15} for the convergence in distribution of the rescaled random fluctuations, we are hoping that a different perspective  could be of independent interest.

To see the connection to the directed polymer more clearly, we write the solution to \eqref{e.she} through a formal Feynman-Kac formula as
\[
Z_{\theta}(t,x)=\E_B[\exp(\int_0^t \xi(t-s,B_s)ds)\exp(W_\theta(B_t))\,|\,B_0=x].
\]
Here $B$ is a standard Brownian motion that is independent of $(\xi,W)$ and $\E_B$ is the expectation on $B$ only. The above expression can be viewed as the partition function of  a directed polymer in the random environment $\xi$, with the boundary condition $W_\theta$. In other words, the polymer measure is the Wiener measure reweighted by the exponential factor $\exp(\int_0^t \xi(t-s,B_s)ds)\exp(W_\theta(B_t))$. Note that it is only a formal expression here since $\xi$ is a space-time white noise -- we will give a rigorous meaning of it in Section~\ref{s.direct} below.

A common feature of our proof and that of \cite{bqs} is to employ the variance identity which relates the height function and the displacement of the directed polymer. Similar identities appeared in other solvable models, see \cite[Theorem 3.6]{timo1} and \cite[Lemma 4.6]{timo4}. The difference is that, we will derive the identity directly on the level of the SHE, while \cite{bqs} used a discrete counterpart. Our proof through an integration by parts relies heavily on the Gaussian nature of the invariant measure. For the SHE with a colored noise, the existence/uniqueness of the invariant measure was shown in \cite{bakhtin,dunlap}, but we do not know whether there is a similar variance identity.

The rest of the paper is organized as follows. In Section~\ref{s.direct}, we use a Gaussian integration by parts to show that the variance of the height function is related to the displacement of the polymer endpoint.  In Sections~\ref{s.upbd} and \ref{s.lowbd}, we prove the upper and the lower bounds in \eqref{e.maingoal} separately.

Throughout the paper, we use $\int$ as a shorthand notation for $\int_{\R}$ and $\|\cdot\|_p$ to denote the norm of $L^p(\Omega,{\mathcal F},\PP)$ for any $p\geq1$.

\subsection*{Acknowledgements}
Y.G. was partially supported by the NSF through DMS-2203014. 
 T.K. acknowledges the support of NCN grant 2020/37/B/ST1/00426.   We thank the two anonymous referees for multiple suggestions which helped to improve the presentation

\section{Continuum directed polymer}
\label{s.direct}

In this section, through a Gaussian integration by parts, we rewrite $\Var\, h_0(t,0)$ as the first absolute moment of a directed polymer in random environment. To state the main result, we first introduce some notations.

Let $\cZ_t(x,y)$ be the Green's function of \eqref{e.she}, i.e., for any $y\in\R$,  
\[
\begin{aligned}
&\partial_t \cZ_t(x,y)=\frac12\Delta_x \cZ_t(x,y)+\xi(t,x)\cZ_t(x,y), \quad\quad (t,x)\in (0,\infty)\times \R,\\
&\cZ_0(x,y)=\delta(x-y).
\end{aligned}
\]
Define the quenched density of the directed polymer starting from $(t,x)$ and running backwards in time as 
\begin{equation}\label{e.defptheta}
p_\theta^x(t,y)=\frac{\cZ_t(x,y)e^{W_\theta(y)}}{\int \cZ_t(x,y')e^{W_\theta(y')}dy'}.
\end{equation}
We denote the endpoint of the polymer path by $B_t$, and let
$\Pb_\theta^x$ be the annealed  probability on the   endpoint, 
i.e., 
\begin{equation}\label{e.defPbtheta}
\Pb_\theta^x(B_t\in A)= \EE \int_A p_\theta^x (t,y)dy=\EE  \frac{\int_A\cZ_t(x,y)e^{W_\theta(y)}dy}{\int \cZ_t(x,y')e^{W_\theta(y')}dy'}.
\end{equation}
The expectation under $\Pb_\theta^x$ will be denoted by
$\E_\theta^x$. We shall  mostly focus our attention  on $p_0^0$, so to simplify the notation we use $\ssp=p_0^0$.

 Before presenting the main result of the section, we recall an elementary fact about the KPZ equation and the directed polymer.  Using the Green's function and the definition of $\ssp$, we can write 
\begin{equation}\label{e.9191}
\begin{aligned}
h_\theta(t,0)&= \log Z_\theta(t,0)=\log \int \cZ_t(0,y)e^{W(y)+\theta y}dy\\
&=\log \int \frac{\cZ_t(0,y)e^{W(y)+\theta y}}{\int\cZ_t(0,y')e^{W(y')}dy'} dy +\log \int\cZ_t(0,y')e^{W(y')}dy'\\
&=\log \int \ssp(t,y)e^{\theta y}dy+\log \int\cZ_t(0,y')e^{W(y')}dy'.
\end{aligned}
\end{equation}
In the first expression on the r.h.s., we note that $y$ is the variable corresponding to the endpoint of the directed polymer, $\theta$ is the dual variable, and $\int \ssp(t,y)e^{\theta y}dy$ is a moment generating function indexed by $\theta\in\R$. Thus, $\partial_\theta^n h_\theta(t,0)\,|_{\theta=0}$ is the corresponding $n-$th cumulant of the density $\ssp(t,\cdot)$.

\bigskip
The main result of this section is the following variance identity:
\begin{proposition}\label{p.inte}
For any $t>0$, we have 
\begin{equation}
\label{050403-22}
\Var\,  h_0(t,0)=\EE \int  |y| \ssp(t,y)dy=\E_0^0|B_t|.
\end{equation}
\end{proposition}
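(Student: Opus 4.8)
The plan is to read the right‑hand side of \eqref{050403-22} as the expectation of a directional derivative of $h_0(t,0)$ with respect to the initial datum, and then transport it onto $\Var\,h_0(t,0)$ by a Gaussian integration by parts in the two‑sided Brownian motion $W$. The starting point is the elementary computation in \eqref{e.9191}: if $h_0^{v}(t,0)$ denotes the value at $(t,0)$ of the solution of \eqref{e.she} started from $e^{W(\cdot)+v(\cdot)}$, with $v$ a fixed (sufficiently regular) function, then, exactly as there,
\[
\partial_\varepsilon\big|_{\varepsilon=0}\,h_0^{\varepsilon v}(t,0)=\frac{\int \cZ_t(0,y)\,e^{W(y)}\,v(y)\,dy}{\int \cZ_t(0,y')\,e^{W(y')}\,dy'}=\int \ssp(t,y)\,v(y)\,dy ,
\]
the case $v(y)=\theta y$ being $\partial_\theta h_\theta(t,0)|_{\theta=0}$. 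Formally inserting $v(y)=|y|$ produces $\int|y|\,\ssp(t,y)\,dy$; since, by \eqref{e.defptheta}--\eqref{e.defPbtheta}, $\EE\int|y|\,\ssp(t,y)\,dy=\E_0^0|B_t|$, the second equality in \eqref{050403-22} is immediate, and the remaining task is to match this with $\Var\,h_0(t,0)$ by taking the expectation of the displayed derivative along the Cameron--Martin direction $v=|\cdot|$, whose distributional derivative is $\mathrm{sign}(\cdot)$.

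To make this precise I would use the white‑noise representation $W(y)=\int_{\R}\kappa(w,y)\,dW(w)$ with $\kappa(w,y)=\1_{\{0<w<y\}}-\1_{\{y<w<0\}}$, so that $\int_{\R}\mathrm{sign}(w)\,\kappa(w,y)\,dw=|y|$. Taking $v=\kappa(w,\cdot)$ in the display identifies the Malliavin derivative of $h_0(t,0)$ with respect to $W$ as $D_w h_0(t,0)=\int \ssp(t,y)\,\kappa(w,y)\,dy$, i.e.\ with the quenched probability of $\{B_t>w\}$ for $w>0$ and minus that of $\{B_t<w\}$ for $w<0$; by Fubini, $\int|y|\,\ssp(t,y)\,dy=\int_{\R}\mathrm{sign}(w)\,D_w h_0(t,0)\,dw$. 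As $\mathrm{sign}\notin L^2(\R)$, I truncate with $v_N(y)=|y|\wedge N$, for which $v_N'(y)=\mathrm{sign}(y)\,\1_{\{|y|\le N\}}$ and $\int_{\R} v_N'(y)\,dW(y)=W(N)+W(-N)$ (using $W(0)=0$); the Cameron--Martin integration‑by‑parts formula then gives the exact identity
\[
\EE\int (|y|\wedge N)\,\ssp(t,y)\,dy=\EE\big[\,h_0(t,0)\,(W(N)+W(-N))\,\big]=\cov\big(h_0(t,0),\,W(N)+W(-N)\big).
\]
Letting $N\to\infty$ — monotone convergence on the left, together with the a priori bound $\E_0^0|B_t|<\infty$ coming from standard moment estimates for \eqref{e.she} and for the polymer endpoint — yields $\EE\int|y|\,\ssp(t,y)\,dy=\lim_{N\to\infty}\cov\big(h_0(t,0),\,W(N)+W(-N)\big)$.

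It remains to prove that this limit equals $\Var\,h_0(t,0)$, and I expect this to be the main obstacle: it is precisely where the equilibrium (stationarity) structure must be used, and it is the continuum counterpart of the delicate second‑class‑particle identity behind \cite{bqs} (cf.\ \cite[Lemma~4.1]{timo3}, and the identities \cite[Theorem~3.6]{timo1}, \cite[Lemma~4.6]{timo4}). Writing $W(x)=h_0(0,x)$, the quantity $\cov(h_0(t,0),W(N)+W(-N))$ is a covariance of the height at the origin at time $t$ with the initial height far away, and the claim amounts to saying that the ``missing'' piece $h_0(t,0)-(W(N)+W(-N))$ becomes uncorrelated with $h_0(t,0)$ as $N\to\infty$. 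A clean way to organize this is to decompose $h_0(t,0)-\EE h_0(t,0)$ into its orthogonal $W$‑ and $\xi$‑martingale parts (Clark--Ocone with respect to each noise), to evaluate the covariance of the $W$‑part with $W(N)+W(-N)$ directly via the It\^{o} isometry, and to check — using the spatial stationarity of $x\mapsto h_0(t,x)-W(x)$ (a consequence of the translation invariance of $\xi$ together with the stationarity of Brownian increments) and the quadratic free energy of Proposition~\ref{l.meanhtheta} — that the contribution of the $\xi$‑part to $\Var\,h_0(t,0)$ is exactly the complementary piece of the same tail integrals. Once this identification is in place, \eqref{050403-22} follows.
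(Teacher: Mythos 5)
The first half of your argument is sound and is, in substance, the paper's own computation: identifying $\D_w h_0(t,0)=\int \ssp(t,y)\kappa(w,y)\,dy$ and applying the Gaussian integration-by-parts formula in the direction $v_N$ reproduces the identity of Lemma~\ref{l.htzwx} (there written as $\cov[\cH(t,z),W(x)]=\EE\int\ssp(t,y)\1_{\{z+y>0\}}\min(x,z+y)\,dy-\min(x,z)$), and your passage $N\to\infty$ corresponds to Lemma~\ref{l.inte} together with the tail bound of Lemma~\ref{l.bdheatk}; the second equality in \eqref{050403-22} is indeed immediate from \eqref{e.defPbtheta}.

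There is, however, a genuine gap at exactly the step you flag: proving $\lim_{N\to\infty}\cov\bigl(h_0(t,0),\,W(N)+W(-N)\bigr)=\Var\,h_0(t,0)$. Since your integration by parts shows that this limit equals $\EE\int|y|\,\ssp(t,y)\,dy$, the remaining claim is literally equivalent to the proposition, so all of the equilibrium input is concentrated there, and your sketch does not supply it. An orthogonal decomposition into $W$- and $\xi$-parts plus the It\^o isometry cannot by itself explain why the covariance with the initial data at $\pm N$ exhausts the \emph{full} variance, including the part generated by $\xi$; and neither the spatial stationarity of $\cH(t,\cdot)$ nor Proposition~\ref{l.meanhtheta} (which controls means and the endpoint's second moment, not $\Var\,h_0(t,0)$) yields the missing identification. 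What the paper actually uses at this point is: (a) the invariance of the two-sided Brownian motion under the KPZ dynamics (Funaki--Quastel), i.e.\ that $\{h_0(t,x)-h_0(t,0)\}_{x\in\R}$ is again a Brownian motion, which in \eqref{e.var2} cancels the term $\tfrac12\Var[h_0(t,x)-h_0(t,0)]-\tfrac12 x$ exactly; and (b) the decorrelation $\cov[\cH(t,x),\cH(t,0)]\to0$ as $|x|\to\infty$, which is not automatic and is proved in the appendix via Clark--Ocone and Poincar\'e-type estimates. Your proposal never invokes (a) at all --- you only derive stationarity of $\cH(t,\cdot)$ from translation invariance, which is strictly weaker than invariance of the Wiener measure --- and it replaces (b) by the unproved assertion that the $\xi$-contribution is ``exactly the complementary piece of the same tail integrals.'' Until these two ingredients (or substitutes for them) are supplied with actual arguments, the proof is incomplete.
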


\begin{remark}\label{r.polymer}
It was shown in \cite[Proposition 3.1]{bqs} that $\Var\, h_0(t,0)=\int |y| S(t,dy)$, with $S(t,dy)$ the symmetric probability measure which is the space-time correlation measure of the stochastic Burgers equation, see \cite[Proposition 1.4]{bqs}. Combining with the above result, we conclude that $S(t,dy)=\EE \ssp(t,y)dy$, which was conjectured and proved nonrigorously in the physics literature, see \cite[Eq. (16)]{MT}. As a matter of fact, applying a proof similar to that of Lemma~\ref{l.inte} below, one can directly show that for any test functions $f,g$, 
\[
\EE \int f'(x)h_0(t,x) \int g'(y)h_0(0,y)dy=\int f(x)g(y)\EE\, \ssp(t,x-y)dxdy,
\]
which implies that on a formal level we have
  $$\EE \Big[\partial_x
h_0(t,x)\partial_x h_0(0,y)\Big]=\EE\, \ssp(t,x-y).
$$
\end{remark}

To prove the above proposition, we start with a few lemmas. First, for
two random variables $X$ and $Y$ over $\Omega$, we let $\cov\,[X,Y]$ denote
their covariance.
Define also 
\begin{equation}
\cH(t,x)=h_0(t,x)-h_0(0,x)=h_0(t,x)- W(x).
\end{equation}
\begin{lemma}
For any $t,x\geq 0$ we have
\begin{equation}\label{e.var2}
\begin{aligned}
\Var \,h_0(t,0)
= \cov[ \cH(t,0)-\cH(t,x), W(x)]+\cov[\cH(t,x),\cH(t,0)].
\end{aligned}
\end{equation}
\end{lemma}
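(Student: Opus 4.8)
The identity follows quickly from the polarization identity for variances together with two standard distributional symmetries of the equilibrium solution, so the plan is to reduce to those two facts and then compute. The two inputs are: (i) because the equation is started at equilibrium, for every $t>0$ the increment field $x\mapsto h_0(t,x)-h_0(t,0)$ is again a two-sided standard Brownian motion, so in particular $\Var\big(h_0(t,x)-h_0(t,0)\big)=|x|=\Var\,W(x)$; and (ii) the map $T_x$ acting on $(\xi,W)$ by $T_x(\xi,W)=\big(\xi(\cdot,\cdot+x),\,W(\cdot+x)-W(x)\big)$ preserves the joint law of $(\xi,W)$, and, by the Feynman--Kac representation of the solution, $Z_0(t,\cdot)\circ T_x=e^{-W(x)}Z_0(t,\cdot+x)$, so $T_x$ sends $h_0(t,0)$ to $h_0(t,x)-W(x)=\cH(t,x)$; hence $\cH(t,x)\stackrel{\mathrm d}{=}h_0(t,0)$ and in particular $\Var\,\cH(t,x)=\Var\,\cH(t,0)=\Var\,h_0(t,0)$ (note $\cH(t,0)=h_0(t,0)$ since $W(0)=0$).

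Granting these, set $D_x:=h_0(t,x)-h_0(t,0)$, so that $\cH(t,0)-\cH(t,x)=W(x)-D_x$ while $\cH(t,x)-\cH(t,0)=D_x-W(x)$. Then
\[
\cov\big[\cH(t,0)-\cH(t,x),\,W(x)\big]=\Var\,W(x)-\cov[D_x,W(x)],
\]
and, applying the polarization identity $\cov[A,B]=\tfrac12\big(\Var A+\Var B-\Var(A-B)\big)$ with $A=\cH(t,x)$ and $B=\cH(t,0)$,
\[
\cov\big[\cH(t,x),\,\cH(t,0)\big]=\tfrac12\Big(\Var\,\cH(t,x)+\Var\,\cH(t,0)-\Var\,D_x-\Var\,W(x)+2\cov[D_x,W(x)]\Big).
\]
Adding the two right-hand sides, the two copies of $\cov[D_x,W(x)]$ cancel and one is left with $\tfrac12\big(\Var\,W(x)-\Var\,D_x\big)+\tfrac12\big(\Var\,\cH(t,x)+\Var\,\cH(t,0)\big)$; by (i) the first term vanishes and by (ii) the second equals $\Var\,h_0(t,0)$, which is \eqref{e.var2}.

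There is no real analytic obstacle here; the whole content is in the two symmetry inputs, both classical features of the stationary KPZ/Burgers equation. The two points that must be handled with some care are (a) verifying that $T_x$ is measure preserving and that it acts on the (mild) solution exactly as $Z_0(t,\cdot)\circ T_x=e^{-W(x)}Z_0(t,\cdot+x)$, and (b) invoking the invariance of the two-sided Brownian motion under the $\theta=0$ KPZ evolution, which is the precise meaning of ``started at equilibrium'' and is a cited result.
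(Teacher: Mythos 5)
Your proof is correct and rests on exactly the same two inputs as the paper's: the invariance of the two-sided Brownian motion under the KPZ evolution (so $\Var[h_0(t,x)-h_0(t,0)]=x=\Var W(x)$, cited from Funaki--Quastel) and the spatial stationarity of $\cH(t,\cdot)$, which the paper obtains from the same shift argument you package as $T_x$. The only difference is a trivial rearrangement of the covariance algebra (you polarize the two covariance terms, the paper expands $\Var[h_0(t,x)-h_0(t,0)]$ and rearranges), so this is essentially the paper's proof.
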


\begin{proof}
For any $x\geq 0$, we start from the elementary identity
\begin{equation}\label{e.var1}
\begin{aligned}
&\Var [h_0(t,x)-h_0(t,0)]=\Var[ \cH(t,x)-\cH(t,0)+ W(x)]\\
&=\Var[\cH(t,x)-\cH(t,0)]+ x+2 \cov[ \cH(t,x)-\cH(t,0), W(x)].
\end{aligned}
\end{equation}
Through the Green's function of SHE, we can write 
\begin{equation}\label{e.htxW}
\begin{aligned}
\cH(t,x)
&=\log \int_{\R}\cZ_{t}(x,y)e^{W(y)-W(x)}dy\\
&=\log \int_{\R} \cZ_{t}(x,x+y)e^{W(x+y)-W(x)}dy,
\end{aligned}
\end{equation}
which implies that, for each fixed $t>0$ and as a process indexed by $x$, $\{\cH(t,x)\}_{x\in\R}$ is stationary. In particular, we have 
\[
\Var[\cH(t,x)-\cH(t,0)]=2\Var\, h_0(t,0)-2\cov[\cH(t,x),\cH(t,0)].
\]
Thus, \eqref{e.var1} becomes 
\begin{equation}\label{e.var3}
\begin{aligned}
\Var \,h_0(t,0)&=\frac12\Var[h_0(t,x)-h_0(t,0)]-\frac12 x\\
&+ \cov[ \cH(t,0)-\cH(t,x), W(x)]\\
&+\cov[\cH(t,x),\cH(t,0)].
\end{aligned}
\end{equation}
By the invariance of $W$, i.e. the fact that
$\{h_0(t,x)-h_0(t,0)\}_{x\in\R}$ is a two-sided Brownian motion for
any $t>0$ (see \cite{funaki}), we conclude that the first line on the r.h.s. of \eqref{e.var3} is zero, which completes the proof.
\end{proof}

Note that the second term on the r.h.s. of \eqref{e.var2} is the
  covariance function of $\cH(t,\cdot)$. For any fixed $t>0$, the
  strong correlation has not kicked in yet so one naturally expect the
  random field to decorrelate on a large distance, i.e., 
\begin{equation}\label{e.decayco}
\lim_{|x|\to \infty}\cov[\cH(t,x),\cH(t,0)]=0.
\end{equation}
Indeed, this was proved in \cite[Proposition 5.2]{bqs}. We will
provide a self-contained proof of \eqref{e.decayco} through an
application of the Gaussian-Poincar\'e covariance inequality. Since
this holds  only for  finite time and does not involve any KPZ behavior, we leave it to the appendix.

%

Given \eqref{e.decayco}, to prove Proposition~\ref{p.inte} we only need to show  
\begin{lemma}\label{l.inte}
As $|x|\to\infty$, we have 
\[
\cov[ \cH(t,0)-\cH(t,x), W(x)]\to  \EE \int |y|\ssp(t,y)dy,\quad t>0.
\]
\end{lemma}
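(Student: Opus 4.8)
The covariance $\cov[\cH(t,0)-\cH(t,x),W(x)]$ is a Gaussian covariance in the random variable $W(x) = \int \1_{[0,x]}(u)\,dW(u)$ (taking $x>0$ for definiteness), so the natural tool is the Gaussian integration by parts / Cameron-Martin formula. The plan is: write $\cov[\cH(t,0)-\cH(t,x),W(x)] = \EE\big[(\cH(t,0)-\cH(t,x))\,W(x)\big]$ and apply the IBP identity $\EE[F\cdot\int h\,dW] = \EE[\int h(u)\,(D_u F)\,du]$, where $D_u$ denotes the Malliavin derivative with respect to the two-sided Brownian motion $W$. So the first step is to compute $D_u\cH(t,z)$ for $z\in\{0,x\}$, i.e. the derivative of $h_0(t,z) = \log\int \cZ_t(z,y)e^{W(y)}\,dy$ in the direction of the white-noise variable generating $W$.

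Here one uses that perturbing $W$ by adding $\eps\,\1_{[0,u]}$ (so that $W(y)\mapsto W(y)+\eps(y\wedge u)_+$ or the appropriate two-sided version) changes the initial datum $e^{W(y)}$ multiplicatively, and differentiating the log of the normalized integral produces exactly the polymer density: $D_u h_0(t,z) = \int \ssp^z(t,y)\,\1\{y \text{ on the appropriate side of }u\}\,dy = \Pb\text{-probability that the endpoint lies beyond } u$, in the quenched sense before taking $\EE$. More precisely, for $z=0$, $D_u\cH(t,0)$ should come out to something like $\int_u^\infty \ssp(t,y)\,dy$ for $u>0$ (and $-\int_{-\infty}^u\ssp(t,y)\,dy$ for $u<0$), minus the contribution from differentiating the $-W(x)$ piece which only matters for $z=x$. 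Then integrating $D_u(\cH(t,0)-\cH(t,x))$ against $\1_{[0,x]}(u)\,du$ and sending $x\to\infty$, the terms involving $\ssp^x(t,\cdot)$ (the polymer based at the far-away point $x$) should vanish because by stationarity of $\cH(t,\cdot)$ in the base point their contribution is controlled by the tail of a fixed-time polymer, and what survives is $\EE\int_0^\infty \big(\int_u^\infty \ssp(t,y)\,dy\big)\,du$ together with the symmetric $u<0$ piece; Fubini then turns $\int_0^\infty\int_u^\infty \ssp(t,y)\,dy\,du = \int_0^\infty y\,\ssp(t,y)\,dy$, and adding both sides gives $\EE\int|y|\ssp(t,y)\,dy$.

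The key steps in order: (1) justify that $h_0(t,z)$ is Malliavin differentiable in $W$ with $\cZ_t$ held fixed, and compute $D_u h_0(t,z)$ explicitly as a quenched polymer tail probability; (2) apply Gaussian IBP to rewrite $\cov[\cH(t,0)-\cH(t,x),W(x)]$ as $\EE\int_0^x D_u(\cH(t,0)-\cH(t,x))\,du$; (3) simplify the integrand using step (1), separating the base-$0$ and base-$x$ polymer contributions and the $-W(x)$-derivative terms; (4) pass to the limit $x\to\infty$, showing the base-$x$ terms die (using stationarity in the base point plus a crude tail bound on the fixed-time polymer endpoint, which follows from integrability of $\cZ_t$) and identifying the surviving base-$0$ term; (5) Fubini to get $\EE\int|y|\ssp(t,y)\,dy$.

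The main obstacle I expect is step (1) together with step (4): making the Malliavin/Cameron-Martin computation rigorous requires care because $\cZ_t(z,y)e^{W(y)}$ is only integrable in $y$ thanks to the Gaussian decay built into $W_\theta$ and the heat kernel bounds on $\cZ_t$, and one must check the differentiation under the integral and the interchange of $\EE$ with the $u$-integral are legitimate; and in step (4) one needs a quantitative handle on $\EE\int_u^\infty \ssp^x(t,y)\,dy$ that is summable/integrable in $u$ uniformly, which amounts to a moment bound on the endpoint of the fixed-time continuum polymer. These are finite-time, non-KPZ estimates — presumably handled exactly as the appendix handles \eqref{e.decayco}, via the Gaussian-Poincar\'e inequality and standard SHE moment bounds — but they are where the real work sits.
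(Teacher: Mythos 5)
Your overall route is the same as the paper's: a Gaussian integration by parts in $W$, with the Malliavin derivative of $h_0(t,z)$ producing the quenched endpoint distribution of the polymer based at $z$, stationarity to recenter the polymer based at the far point, and a finite-time Gaussian tail bound on $\EE\,\ssp(t,\cdot)$ (exactly the paper's Lemma~\ref{l.bdheatk}, proved from positive/negative moment bounds for the SHE) to justify the limit; your steps (1), (2), (3) and (5) are precisely what the paper does via Lemma~\ref{l.htzwx}.

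There is, however, a concrete error in your step (4): the terms involving the base-$x$ polymer do \emph{not} vanish as $x\to\infty$, and dropping them would break the computation. For $0<u<x$ one has, quenched,
$\D_u\bigl(\cH(t,0)-\cH(t,x)\bigr)=\int_u^\infty \ssp(t,y)\,dy-\int_u^\infty p_0^x(t,y)\,dy+1=\int_u^\infty \ssp(t,y)\,dy+\int_{-\infty}^u p_0^x(t,y)\,dy$,
the $+1$ coming from differentiating the $-W(x)$ in $\cH(t,x)$. Integrating over $u\in(0,x)$ and taking $\EE$, the first summand converges to $\EE\int_0^\infty y\,\ssp(t,y)\,dy$, i.e.\ only half of the claimed limit; the second summand, after the stationarity shift $u\mapsto u-x$ (under which $p_0^x(t,x+\cdot)$ has the law of $\ssp(t,\cdot)$), converges to $\EE\int_{-\infty}^0|y|\,\ssp(t,y)\,dy$ --- this \emph{is} your ``symmetric $u<0$ piece'', and it is carried entirely by the far-base polymer combined with the $W(x)$-derivative, so it cannot both come from a vanishing term and survive. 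The pointwise smallness of $\int_{-\infty}^u p_0^x(t,y)\,dy$ for fixed $u$ does not help, because it is integrated over the growing interval $(0,x)$; if you literally discard the $p_0^x$ contribution you are left with $\int_0^x 1\,du=x\to\infty$ (equivalently, keeping the $+1$ inside the cancellation but dropping $p_0^x$ yields only half the answer). The paper's bookkeeping avoids this by proving the identity of Lemma~\ref{l.htzwx} for an arbitrary base point $z\ge0$, using stationarity and the evenness of $\EE\,\ssp(t,\cdot)$ to rewrite the covariance as $2\,\EE\int_0^x y\,\ssp(t,y)\,dy+2x\,\EE\int_x^\infty \ssp(t,y)\,dy$, and then killing only the second term with Lemma~\ref{l.bdheatk}; with that correction your plan goes through.
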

 
The proof of the above lemma is through an integration by parts in the Gaussian space. Note that we have two Gaussian processes here, the noise $\xi$ and the two-sided Brownian motion $W$. We will perform an integration by parts on $W$, for each realization of  $\xi$.  

We first introduce some notations. Let $\W$ be the spatial white noise associated with $W$, i.e., in the distributional sense we have $\W(x)=W'(x)$. 
For any $\varphi\in L^2(\R)$, we write $\W(\varphi):=\int \varphi(z)\W(z)dz$, which is the usual Wiener integral. In this way, for $x>0$, we write $W(x)=\W(\1_{[0,x]}(\cdot))$. Let $\D$ be the Malliavin derivative with respect to $\W$. For a random variable $X$ that is a smooth functional  of $\W$, $\D X$ is an $L^2(\R)-$valued random variable, which we write  as $\D X=(\D_r X)_{r\in\R}$, and one interprets $\D_r X$ as the derivative of $X$ with respect to $\W(r)$. For an introduction to   Malliavin calculus, we refer to \cite[Chapter 1]{nualart}.

The following lemma is the key to link the variance of the $h_0$ to the density of the continuum directed polymer. 
\begin{lemma}\label{l.htzwx}
For any $z\geq0$ and $t,x>0$, we have 
\[
\begin{aligned}
\cov[\cH(t,z),W(x)]=\EE  \int \ssp(t,y)\1_{\{z+y>0\}}\min(x,z+y)dy- \min(x,z).
\end{aligned}
\]
\end{lemma}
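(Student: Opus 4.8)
The plan is to compute $\cov[\cH(t,z), W(x)]$ directly via the Gaussian integration-by-parts (Malliavin) formula $\cov[X, \W(\varphi)] = \EE\int \varphi(r)\, \D_r X\, dr$, applied with $X = \cH(t,z)$ and $\varphi = \1_{[0,x]}$ (valid since $x>0$). So the first step is to identify $\D_r \cH(t,z)$. From the representation $\cH(t,z) = \log \int \cZ_t(z,y) e^{W(y)-W(z)}\,dy$ in \eqref{e.htxW}, and since the Malliavin derivative acts only on $W$ (the noise $\xi$, hence $\cZ_t$, is held fixed), I expect
\[
\D_r \cH(t,z) = \frac{\int \cZ_t(z,y)\, e^{W(y)-W(z)}\, \D_r\big(W(y)-W(z)\big)\, dy}{\int \cZ_t(z,y')\, e^{W(y')-W(z)}\, dy'} = \int \ssp(t,y)\,\big(\D_r W(y) - \D_r W(z)\big)\, dy,
\]
using that $\ssp(t,y) = p_0^0(t,y)$ coincides with the quenched polymer density with base point $0$; here one must be slightly careful that the base point in $\cH(t,z)$ is $z$, not $0$, so what really appears is $p_0^z(t,y)$, but by the spatial stationarity of $\cZ$ in its arguments (shift invariance of $\xi$) one can reduce to $\ssp$ with the substitution $y \mapsto z+y$, which is exactly why the final formula features $\ssp(t,y)$ evaluated with argument shifted by $z$.

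The second step is to evaluate $\D_r W(y)$. Since $W(y) = \W(\1_{[0,y]})$ for $y>0$ and $W(y) = -\W(\1_{[y,0]})$ for $y<0$, we get $\D_r W(y) = \1_{[0,y]}(r)$ for $y>0$ and $\D_r W(y) = -\1_{[y,0]}(r) = -\1_{\{y<r<0\}}$ for $y<0$; compactly, $\D_r W(y) = \operatorname{sgn}(y)\,\1_{\{r \text{ between } 0 \text{ and } y\}}$. Then
\[
\cov[\cH(t,z), W(x)] = \EE \int_0^x \Big( \int \ssp(t,y)\,\big(\D_r W(z+y) - \D_r W(z)\big)\, dy \Big)\, dr,
\]
after the $y \mapsto z+y$ shift, so that the integrand involves $\D_r W(z+y) - \D_r W(z)$. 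Swapping the $r$ and $y$ integrals (Fubini, justified by boundedness of the indicators on $[0,x]$), the inner integral $\int_0^x \big(\D_r W(z+y) - \D_r W(z)\big)\, dr$ is a deterministic function of $x, z, y$ that one computes by elementary case analysis on the signs and magnitudes of $z+y$ and $z$ relative to the interval $[0,x]$. With $z \ge 0$ and $x > 0$ one finds $\int_0^x \D_r W(z)\, dr = \min(x,z)$, and $\int_0^x \D_r W(z+y)\, dr = \1_{\{z+y>0\}}\min(x, z+y)$ (the contribution being $0$ when $z+y \le 0$ since then $\D_r W(z+y)$ is supported in $(z+y, 0) \subset (-\infty, 0)$, disjoint from $(0,x)$). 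Plugging in gives exactly the claimed identity.

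The main obstacle is not any single computation but the rigorous justification of the Gaussian integration by parts: one must check that $\cH(t,z)$ is genuinely in the domain of the Malliavin derivative (i.e. $\cH(t,z) \in \mathbb{D}^{1,2}$ with respect to $\W$) and that the covariance formula applies, which requires some care because $\cH(t,z)$ is defined as a logarithm of an integral against the SHE Green's function, and one is conditioning on $\xi$ while differentiating in $W$. I would handle this by working conditionally on $\xi$: for a.e.\ realization of $\xi$, $\cZ_t(z,\cdot)$ is a fixed positive function with suitable integrability and Gaussian tails (from known SHE moment bounds), so $y \mapsto \cZ_t(z,y) e^{W(y)-W(z)}$ is integrable, the map $W \mapsto \cH(t,z)$ is smooth with the derivative computed above, and the a priori bound $|\D_r \cH(t,z)| \le \int \ssp(t,y)\,|\D_r W(z+y) - \D_r W(z)|\,dy$ together with integrability of $\ssp$ gives the $L^2$ control needed to integrate back over $\xi$. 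A secondary technical point is justifying all the Fubini interchanges and the shift of integration variable $y \mapsto z+y$ together with the stationarity identity $p_0^z(t, z+\cdot) \overset{\mathrm{law}}{=} \ssp(t, \cdot)$ as random functions under $\EE$ — but this follows from the shift invariance of the white noise $\xi$ in the spatial variable.
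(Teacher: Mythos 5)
Your argument is correct and is essentially the paper's own proof: a Gaussian integration by parts in $W$ conditionally on $\xi$, differentiation of the log-partition function, the change of variables $y\mapsto z+y$, and stationarity to replace the base-point-$z$ quenched density $p_0^z(t,z+\cdot)$ by $\ssp(t,\cdot)$ after taking $\EE$. The only cosmetic difference is that the paper peels off $-W(z)$ first and uses $\cov[W(z),W(x)]=\min(x,z)$, whereas you keep it inside the Malliavin derivative and recover the same term from $\int_0^x \D_r W(z)\,dr=\min(x,z)$.
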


\begin{proof}
Recall that $\cH(t,z)=h_0(t,z)-W(z)$, so we only need to consider the
covariance of $h_0(t,z)$ and $W(x)$. For every realization of $\xi$,
by the integration by parts formula, see e.g. \cite[(1.42), p. 37]{nualart}, we have 
\begin{equation}
\begin{aligned}
\cov[h_0(t,z),W(x)]=\EE[W(x)h_0(t,z)]&=\EE[\W(\1_{[0,x]}(\cdot)) h_0(t,z)]\\
&=\EE\, \la \1_{[0,x]}(\cdot),\D h_0(t,z)\ra.
\end{aligned}
\end{equation}
Here $\la\cdot,\cdot\ra$ is the inner product in $L^2(\R)$. Using the expression
\[
h_0(t,z)=\log \int \cZ_{t}(z,y)e^{ W(y)} dy,
\]
we have for any $r\geq0$ that
\[
\D_r h_0(t,z)= \frac{\int_0^\infty \cZ_{t}(z,y)e^{ W(y)}\1_{[0,y]}(r)dy }{\int \cZ_{t}(z,y)e^{ W(y)}dy}.
\]
This, in turn, implies 
\[
\begin{aligned}
\la \1_{[0,x]}(\cdot),\D h_0(t,z)\ra&=\int_0^x \D_r h_0(t,z)dr\\
&= \frac{\int  \cZ_{t}(z,y)e^{ W(y)}\1_{\{y>0\}}\min(x,y)dy}{ \int \cZ_{t}(z,y)e^{ W(y)}dy}\\
&= \frac{\int \cZ_{t}(z,z+y)e^{W(z+y)-W(z)}\1_{\{z+y>0\}}\min(x,z+y)dy}{ \int  \cZ_{t}(z,z+y)e^{W(z+y)-W(z)}dy}.
\end{aligned}
\]
Here in the last ``='' we changed variable $y\mapsto y+z$.  Taking
expectation, using the stationarity and the definition of
$\ssp(t,\cdot)$ (see \eqref{e.defptheta}), 
we have 
\[
\begin{aligned}
\EE\la \1_{[0,x]}(\cdot),\D h_0(t,z)\ra&= \EE \int \ssp(t,y)\1_{\{z+y>0\}}\min(x,z+y)dy, \\
\end{aligned}
\]
which completes the proof.
\end{proof}
 
Using the previous lemma we can complete the proof of Lemma~\ref{l.inte} hence that of Proposition~\ref{p.inte}:

\bigskip

\begin{proof}
By Lemma~\ref{l.htzwx}, we have 
\[
\begin{aligned}
\cov[ \cH(t,0)-\cH(t,x), W(x)]=&\EE \int  \ssp(t,y)\1_{\{y>0\}}\min(x,y)dy\\
&- \EE \int  \ssp(t,y)\1_{\{x+y>0\}}\min(x,x+y)dy+x.
\end{aligned}
\]
By the fact that $\EE \ssp(t,\cdot)$ is an even probability density, we can rewrite it as
\begin{equation}
\begin{aligned}
&\cov[\cH(t,0)-\cH(t,x),W(x)]\\
&=2 \EE \int_{0}^x y\ssp(t,y)dy+2 x\EE \int_x^\infty\ssp(t,y)dy.
\end{aligned}
\end{equation}
Applying Lemma~\ref{l.bdheatk} below, we complete the proof.
\end{proof}

\begin{lemma}\label{l.bdheatk}
For any $t>0$, there exists $C_t>0$ so that 
\[
\EE \,\ssp(t,x) \leq C_t \exp(-x^2/C_t), \quad\quad x\in\R. 
\]
\end{lemma}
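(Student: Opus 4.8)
The plan is to obtain a Gaussian upper bound on the annealed polymer density $\EE\,\ssp(t,x)$ by unpacking the definition of $\ssp$ and controlling the random normalizing constant from below. Recall that
\[
\ssp(t,x)=\frac{\cZ_t(0,x)e^{W(x)}}{\int \cZ_t(0,y)e^{W(y)}dy},
\]
so $\EE\,\ssp(t,x)\le \EE\big[\cZ_t(0,x)e^{W(x)}/N_t\big]$ where $N_t=\int \cZ_t(0,y)e^{W(y)}dy=Z_0(t,0)$. The idea is to use the stationarity observed in \eqref{e.htxW}: $\{\cH(t,x)\}_x$ is a stationary process, hence $N_t e^{-W(x)}=\int \cZ_t(0,x+y)e^{W(x+y)}dy \cdot e^{W(x)-W(x)}$... more precisely, by the shift argument of \eqref{e.htxW}, $e^{\cH(t,x)}=\int\cZ_t(x,x+y)e^{W(x+y)-W(x)}dy$ has a law independent of $x$. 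Since the numerator $\cZ_t(0,x)e^{W(x)}$ is being integrated against the \emph{reciprocal} of a quantity correlated with it, the cleanest route is Cauchy--Schwarz: $\EE\,\ssp(t,x)\le \big(\EE[\cZ_t(0,x)^2 e^{2W(x)}/N_t^2]\big)^{1/2}$ or, better, split as $\EE[\cdots \mathbf 1_{\{N_t\ge \delta_t\}}]+\EE[\cdots\mathbf 1_{\{N_t<\delta_t\}}]$; on the first event the density is bounded by $\delta_t^{-1}\EE[\cZ_t(0,x)e^{W(x)}]$, and the second event has to be shown to contribute negligibly with a Gaussian tail in $x$ — but actually the second term does not involve $x$ at all, so it is harmless once we know it is finite, and it is cleaner to note $\ssp(t,x)\le 1$ trivially is useless; instead we keep the full fraction and exploit that $N_t\ge \cZ_t(0,y)e^{W(y)}$ integrated over any region.

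The key computation is therefore a moment bound on the Green's function. First I would recall the standard fact (e.g. from the mild formulation and hypercontractivity / the explicit second-moment formula for the SHE) that $\EE\,\cZ_t(0,x)^2 \le C_t e^{-x^2/C_t}$ for a constant depending on $t$; this is the Gaussian heat-kernel decay of the two-point function of the SHE, which follows from the Feynman--Kac representation $\EE\,\cZ_t(0,x)^2=\EE_{B,B'}[\exp(L_t(B-B'))\,\delta\text{-initial}]$ bounded by $e^{Ct}$ times the Gaussian density $p_t(x)^{\,\cdot}$, or directly from the chaos expansion. Then I would write, using the Cauchy--Schwarz inequality in the probability space and the independence structure between $\xi$ (which drives $\cZ$) and $W$,
\[
\EE\,\ssp(t,x)=\EE\Big[\frac{\cZ_t(0,x)e^{W(x)}}{N_t}\Big]\le \Big(\EE\big[\cZ_t(0,x)^2\big]\Big)^{1/2}\Big(\EE\big[e^{2W(x)}N_t^{-2}\big]\Big)^{1/2},
\]
and the second factor must be shown to be bounded uniformly in $x$. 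For this, use $N_t=\int\cZ_t(0,y)e^{W(y)}dy\ge e^{W(x)}\int_{|y-x|\le 1}\cZ_t(0,y)e^{W(y)-W(x)}dy$, so $e^{W(x)}/N_t\le \big(\int_{|y-x|\le1}\cZ_t(0,y)e^{W(y)-W(x)}dy\big)^{-1}$, and the law of the latter integral is independent of $x$ by the stationarity in \eqref{e.htxW}; one then needs a negative-moment bound $\EE\big[(\int_{|y|\le1}\cZ_t(0,y)e^{W(y)}dy)^{-2}\big]<\infty$, which is where the real work lies.

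The main obstacle is precisely this negative-moment (lower tail) estimate on $\int_{|y|\le 1}\cZ_t(0,y)e^{W(y)}dy$. Establishing that a localized integral of the SHE solution does not get too small requires a lower-tail bound for $\cZ_t$; I would invoke the known result that $\log\cZ_t(0,y)$ (the narrow-wedge KPZ solution) has stretched-exponential lower tails uniformly for $y$ in a compact set (this is classical, e.g. via the Mueller-type positivity / comparison and moment bounds, or Chapter 3-level SHE estimates), which gives $\EE[(\int_{|y|\le1}\cZ_t(0,y)e^{W(y)}dy)^{-p}]<\infty$ for all $p\ge1$ and in particular $p=2$. Combining this finite constant with the Gaussian bound $\EE\,\cZ_t(0,x)^2\le C_t e^{-x^2/C_t}$ and absorbing the $1/2$ powers into a relabeled $C_t$ yields $\EE\,\ssp(t,x)\le C_t e^{-x^2/C_t}$, which is the claim. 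An alternative, avoiding negative moments, is to note $\ssp(t,x)=p_0^0(t,x)$ is itself bounded by $1$ in the sense $\int\ssp(t,x)dx=1$ a.s., so $\EE\,\ssp(t,x)$ is an honest probability density; then one only needs a first-moment / Chebyshev argument on the event $\{N_t\ge e^{-K}\}$ together with a crude bound $\PP(N_t< e^{-K})\le e^{-cK^{3/2}}$ chosen with $K$ of order $x^2$ — but this is essentially the same lower-tail input in disguise, so I expect the negative-moment route to be the most economical to write down.
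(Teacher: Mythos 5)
Your overall skeleton---Cauchy--Schwarz to separate the numerator from the normalization $N_t=\int\cZ_t(0,y)e^{W(y)}dy$, a Gaussian moment bound on $\cZ_t(0,x)$, and a negative-moment bound on the partition function---is the same as the paper's, but the step you use to control the second Cauchy--Schwarz factor uniformly in $x$ is wrong. You bound $e^{W(x)}/N_t\le\bigl(\int_{|y-x|\le1}\cZ_t(0,y)e^{W(y)-W(x)}dy\bigr)^{-1}$ and then assert that the law of this integral does not depend on $x$ ``by the stationarity in \eqref{e.htxW}''. The stationarity in \eqref{e.htxW} concerns $\int\cZ_t(x,x+y)e^{W(x+y)-W(x)}dy$, where the \emph{starting point} of the Green's function is shifted together with the endpoint; in your integral the starting point stays frozen at $0$ and only the endpoint is translated, and that field is not stationary: $\EE\,\cZ_t(0,y)=(2\pi t)^{-1/2}e^{-y^2/(2t)}$, so for $|y-x|\le1$ your integrand is typically of size $e^{-x^2/(2t)}$. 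Indeed, by Jensen and $\EE e^{W(y)-W(x)}\le e^{1/2}$ on $|y-x|\le 1$,
\[
\EE\Bigl[\Bigl(\int_{|y-x|\le1}\cZ_t(0,y)e^{W(y)-W(x)}dy\Bigr)^{-2}\Bigr]\ \ge\ \Bigl(\EE\int_{|y-x|\le1}\cZ_t(0,y)e^{W(y)-W(x)}dy\Bigr)^{-2}\ \ge\ c_t\, e^{(|x|-1)^2/t},\qquad |x|\ge 1,
\]
so $\EE[e^{2W(x)}N_t^{-2}]$ is \emph{not} bounded uniformly in $x$ along your route: the square root of your bound grows at least like $e^{(|x|-1)^2/(2t)}$, which cancels precisely the Gaussian decay $(\EE\,\cZ_t(0,x)^2)^{1/2}\lesssim e^{-x^2/(2t)}$ gained in the first factor, and the chain of inequalities as written yields no decay in $x$ at all.

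The gap is repairable, and the repair essentially lands on the paper's proof. Localize near the origin instead of near $x$: $N_t\ge\int_{|y|\le1}\cZ_t(0,y)e^{W(y)}dy$, which does not depend on $x$; then a further Cauchy--Schwarz separates $e^{2W(x)}$, whose moments only cost a factor $e^{C|x|}$ that is harmless against the Gaussian decay of $\|\cZ_t(0,x)\|_2$, and what remains is a negative moment of $\int_{|y|\le1}\cZ_t(0,y)e^{W(y)}dy$, where the Brownian weight is controlled by $\sup_{|y|\le1}|W(y)|$ (independent of $\xi$, Gaussian tails). The paper avoids even the localized lower-tail estimate that you flag as ``the real work'': it applies Jensen with respect to the probability measure $\cZ_t(0,y')dy'/\int\cZ_t(0,y'')dy''$ to strip off the Brownian factor, reducing everything to negative moments of the total mass $\int\cZ_t(0,y)dy$, quoted from \cite[Corollary 4.8]{khoa}, together with the Gaussian bound $\|\cZ_t(0,x)\|_p\le C_{t,p}e^{-x^2/C_{t,p}}$ of \cite{chenle}. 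So no new lower-tail analysis of the narrow-wedge solution is needed, but as written your argument does not prove the lemma.
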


\begin{proof}
First, we   write, by the Cauchy-Schwarz inequality,
\[
\begin{aligned}
\EE \, \ssp(t,x)&=\EE \frac{\cZ_t(0,x)e^{W(x)}}{\int \cZ_t(0,x')e^{W(x')}dx'}\\
&\leq  \EE (\int \cZ_t(0,x')e^{W(x')}dx')^{-2} \EE \cZ_t(0,x)^2 e^{2W(x)}. 
\end{aligned}
\]
For the first expectation, by Jensen's inequality we derive 
\[
(\int \cZ_t(0,x')e^{W(x')}dx')^{-2} \leq (\int \cZ_t(0,x'')dx'')^{-2}\int \cZ_t(0,x')e^{-2W(x')}dx',
\]
which implies 
\[
\begin{aligned}
&\EE (\int \cZ_t(0,x')e^{W(x')}dx')^{-2} \\
&\leq \sqrt{\EE (\int \cZ_t(0,x'')dx'')^{-4}}\int \|\cZ_t(0,x')\|_4\|e^{-2W(x')}\|_4dx'.
\end{aligned}
\]
We also have 
\[
\EE \cZ_t(0,x)^2 e^{2W(x)}=e^{2|x|}\EE \cZ_t(0,x)^2.
\]
Then the proof is completed by invoking the following negative and positive moment estimates: for any $p\geq1$, 
\[
\begin{aligned}
\EE (\int \cZ_t(0,x)dx)^{-p} \leq C_{t,p}, \quad\quad \|\cZ_t(0,x)\|_p\leq C_{t,p}\exp(-x^2/C_{t,p}),
\end{aligned}
\]
see \cite[Corollary 4.8]{khoa} and \cite[Theorem 2.4, Example 2.10]{chenle} respectively.
\end{proof}

At the end of this section, we present the following result which will be used frequently. 
\begin{proposition}\label{l.meanhtheta}
(i) For any $t>0,x\in\R$, we have 
\[
\EE h_\theta(t,x)=\EE h_0(t,0)+\theta x+\frac12\theta^2 t.
\]
(ii) For any $t>0, \theta\in\R$, we have 
\[
\sqrt{\Var\,h_\theta(t,0)} \leq \sqrt{\Var\, h_0(t,0)}+ \sqrt{|\theta|t}.
\]
(iii) For any $\theta,x\in\R, t>0$ and $A\subset \R$, we have 
\[
\Pb_\theta^x(B_t\in A)=\Pb_0^0(x+B_t+\theta t\in A).
\]
(iv) For any  $t>0$ the function $\theta\mapsto h_\theta(t,0)$, $\theta\in\R$
is convex.

\end{proposition}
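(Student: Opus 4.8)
The plan is to establish the four parts of Proposition~\ref{l.meanhtheta} essentially in the order stated, since each part is either an easy consequence of the scaling/shear invariance of the noise or follows from an identity already recorded in \eqref{e.9191}.

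For part (i), the key observation is the shear (tilt) invariance of the space-time white noise. Starting from the Feynman–Kac representation $Z_\theta(t,x)=\E_B[\exp(\int_0^t\xi(t-s,B_s)\,ds)e^{W(B_t)+\theta B_t}\mid B_0=x]$, I would perform the change of path variables $B_s\mapsto B_s+\theta s$ (equivalently, exploit that $\xi(\cdot,\cdot+\theta\cdot)$ has the same law as $\xi$, and that $W(\cdot)+\theta(\cdot)$ under shift reduces to $W$). Carrying out the Girsanov/shear computation, the shifted Brownian path picks up a factor $e^{\theta(B_t-B_0)-\frac12\theta^2 t}$ from the Radon–Nikodym derivative of Brownian motion with drift, while the boundary weight $e^{W(B_t)+\theta B_t}$ becomes $e^{W(B_t)}$ after absorbing the tilt. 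Tracking the exponents shows $Z_\theta(t,x)$ equals $e^{\theta x+\frac12\theta^2 t}$ times a random variable equal in law to $Z_0(t,0)$; taking logarithms and expectations gives $\EE h_\theta(t,x)=\EE h_0(t,0)+\theta x+\frac12\theta^2 t$. One must do this at the level of the rigorous (Green's-function) definition of the SHE rather than the formal Feynman–Kac formula, but the shear invariance of $\cZ_t$ is standard.

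Part (ii) follows by writing $h_\theta(t,0)=h_0(t,0)+\big(h_\theta(t,0)-h_0(t,0)\big)$ and using subadditivity of the $L^2(\Omega)$-seminorm $X\mapsto\sqrt{\Var X}$ (a genuine seminorm), so $\sqrt{\Var h_\theta(t,0)}\le\sqrt{\Var h_0(t,0)}+\sqrt{\Var(h_\theta(t,0)-h_0(t,0))}$. It then suffices to bound $\Var(h_\theta(t,0)-h_0(t,0))\le|\theta|t$. Here I would use \eqref{e.9191}: the $\theta$-dependent part of $h_\theta(t,0)$ is exactly $\log\int\ssp(t,y)e^{\theta y}\,dy$, so $h_\theta(t,0)-h_0(t,0)=\log\int\ssp(t,y)e^{\theta y}\,dy=:\Lambda(\theta)$. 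Since $\Lambda$ is the cumulant generating function of the random density $\ssp(t,\cdot)$, its second derivative in $\theta$ is the (quenched) variance of $B_t$ under the polymer measure, hence nonnegative; combined with part (iv) this gives convexity, but for the variance bound I would instead argue via part (i): $\EE\Lambda(\theta)=\frac12\theta^2 t$, and then control $\Var\Lambda(\theta)$. A clean route is the interpolation/FKG-type bound or simply differentiating: $\tfrac{d}{d\theta}\Var(h_\theta-h_0)=2\cov(h_\theta-h_0,\partial_\theta h_\theta)$ and $\partial_\theta h_\theta(t,0)=\E^0_\theta B_t$ (annealed-quenched), which by part (iii) equals $\theta t$; a Cauchy–Schwarz step closes it. I expect this variance bound to be the main obstacle, since it requires more than soft invariance — one genuinely uses the structure of $\Lambda$ and the identity $\EE\Lambda(\theta)=\tfrac12\theta^2 t$ together with a monotonicity/convexity input.

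Part (iii) is again shear invariance, now at the level of the polymer endpoint measure rather than the free energy: the change of variables $B_s\mapsto x+B_s+\theta s$ maps the law of $B_t$ under $\Pb_0^0$ to that under $\Pb_\theta^x$, exactly mirroring the computation in (i) but without taking logarithms — one checks that the (annealed) polymer measure $\Pb_\theta^x(B_t\in A)=\EE\int_A p_\theta^x(t,y)\,dy$ transforms covariantly under the tilt, using $\cZ_t(x,y)=\cZ_t(x-\theta t,\,y)\cdot(\text{explicit drift factor})$ in law together with translation invariance of $\cZ$ in the spatial pair. Finally, part (iv): differentiating \eqref{e.9191} twice in $\theta$, $\partial_\theta^2 h_\theta(t,0)=\partial_\theta^2\log\int\ssp(t,y)e^{\theta y}\,dy$, which is the variance of $y$ under the tilted probability measure $\ssp(t,y)e^{\theta y}\,dy/\int\ssp(t,y')e^{\theta y'}\,dy'$ and hence $\ge0$ pointwise in $\omega$; convexity of $\theta\mapsto h_\theta(t,0)$ for each fixed realization follows, so it holds almost surely, which is the claim. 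I would remark that (iv) also follows directly from Hölder's inequality applied to $\theta\mapsto\int\ssp(t,y)e^{\theta y}\,dy$ without any differentiation, which is the cleanest presentation.
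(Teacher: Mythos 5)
Parts (i), (iii) and (iv) are essentially sound and follow the same ideas as the paper: (i) and (iii) are the shear invariance of the noise (the paper does (i) at the PDE level via $\{Z_\theta(t,x)\}\stackrel{\text{law}}{=}\{Z_0(t,x+\theta t)e^{\theta x+\frac12\theta^2 t}\}$ together with $\EE h_0(t,x+\theta t)=\EE h_0(t,0)$; note that after your Girsanov step the boundary weight becomes $e^{W(\cdot+\theta t)}$, which is \emph{not} pinned at the origin, so your phrase ``equal in law to $Z_0(t,0)$'' is not literally correct and you still need the zero-mean of $W(\theta t)$ or the stationarity of $\EE h_0(t,\cdot)$ to finish), while your version of (iii) by shear plus spatial translation of $(\cZ,W)$ is if anything cleaner than the paper's, which goes through time reversal twice. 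Part (iv) is the same log-convexity/Jensen argument as \eqref{e.varBt}.

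The genuine gap is in (ii). Your reduction requires $\Var\big(h_\theta(t,0)-h_0(t,0)\big)=\Var\,\Lambda(\theta)\le|\theta|t$, and the sketch you give does not close. First, $\partial_\theta h_\theta(t,0)=\int y\,p^0_\theta(t,y)\,dy$ is the \emph{quenched} mean of the endpoint, a random variable; part (iii) only says its \emph{annealed} expectation equals $\theta t$, so the identity ``$\partial_\theta h_\theta(t,0)=\E_\theta^0 B_t=\theta t$'' conflates the two and is false pathwise. Second, after writing $\tfrac{d}{d\theta}\Var\,\Lambda(\theta)=2\,\cov\big(\Lambda(\theta),\partial_\theta h_\theta(t,0)\big)$, the Cauchy--Schwarz step forces you to control $\Var\big(\int y\,p^0_\theta(t,y)dy\big)$, i.e.\ the variance of the quenched endpoint mean --- precisely the order-$t^{4/3}$ quantity that Lemma~\ref{l.varquenchedmean} is designed to bound, and that lemma itself uses part (ii); so your route is circular, and indeed a direct proof of $\Var\,\Lambda(\theta)\le|\theta|t$ would already encode the $t^{2/3}$ fluctuation bound and is essentially as hard as the theorem. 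The paper's proof of (ii) sidesteps all of this: by the shear identity $\Var\,h_\theta(t,0)=\Var\,h_0(t,\theta t)$, and since $\{h_0(t,x)-h_0(t,0)\}_{x\in\R}$ is a two-sided Brownian motion (the invariance of the Brownian measure, \cite{funaki}), one has $h_0(t,\theta t)-h_0(t,0)\sim N(0,|\theta|t)$ exactly, and the triangle inequality for $\sqrt{\Var}$ gives the claim. This invariance, which you never invoke for (ii), is the indispensable ingredient; you should replace your argument for (ii) by it.
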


\begin{proof}
The result is rather standard, so we only sketch the argument.  

(i) Recall that $Z_\theta$ solves \eqref{e.she}, with $Z_\theta(0,x)=e^{W_\theta(x)}$ and $h_\theta=\log Z_\theta$. We claim 
\begin{equation}\label{e.331}
\{Z_\theta(t,x)\}_{t>0,x\in\R}\stackrel{\text{law}}{=}\{Z_0(t,x+\theta t)e^{\theta x+\frac12\theta^2 t}\}_{t>0,x\in\R},
\end{equation}
which comes from the fact that $Z_0(t,x+\theta t)e^{\theta x+\frac12\theta^2 t}$ solves \eqref{e.she} with $\{\xi(t,x)\}$ replaced by $\{\xi(t,x+\theta t)\}$ and the two random fields have the same distribution. With \eqref{e.331}, we have $\EE h_\theta(t,x)=\EE h_0(t,x+\theta t)+\theta x+\frac12\theta^2 t$. But we also have 
\[
\EE h_0(t,x+\theta t)=\EE h_0(t,0),
\]
as $\{h_0(t,x)-h_0(t,0)\}_{x\in\R}$ is a two-sided Brownian motion, thus, (i) is proved. 

(ii) By \eqref{e.331}, we have $\Var\, h_\theta(t,0)=\Var \,
h_0(t,\theta t)$.  Since $h_0(t,\theta
t)-h_0(t,0)\stackrel{\text{law}}{=} N(0,|\theta| t)$, we complete the
proof of (ii) by the triangle inequality.

(iii) We write the probability explicitly and change variables to obtain
\[
\begin{aligned}
\Pb_\theta^x(B_t\in A)=&\EE \frac{\int \cZ_t(x,y)e^{W_\theta(y)}1_{\{y\in A\}} dy}{\int \cZ_t(x,y)e^{W_\theta(y)}  dy}\\
=&\EE \frac{\int \cZ_t(x,x+\theta t +y)e^{W_\theta(x+\theta t+y)}1_{\{x+\theta t +y\in A\}} dy}{\int \cZ_t(x,x+\theta t+y)e^{W_\theta(x+\theta t +y)}  dy}\\
=&\EE \frac{\int \cZ_t(0,\theta t +y)e^{W(y)+\theta y}1_{\{x+\theta t +y\in A\}} dy}{\int \cZ_t(0,\theta t+y)e^{W(y)+\theta y}  dy},
\end{aligned}
\]
where in the last ``='' we used the stationarity. By the time reversal we have $\{\cZ_t(0,x)\}_{x\in\R}\stackrel{\text{law}}{=}\{\cZ_t(x,0)\}_{x\in\R}$, so the above probability can be written as
\[
\Pb_\theta^x(B_t\in A)=\EE \frac{\int \cZ_t(\theta t +y,0)e^{W(y)+\theta y}1_{\{x+\theta t +y\in A\}} dy}{\int \cZ_t(\theta t+y,0)e^{W(y)+\theta y}  dy}.
\]
Similar to \eqref{e.331}, we have 
\[
\{\cZ_t(\theta t+y,0)e^{\theta y+\frac12\theta^2 t}\}_{t>0,y\in\R}\stackrel{\text{law}}{=}\{\cZ_t(y,0)\}_{t>0,y\in\R},
\]
using which we rewrite the probability as 
\[
\Pb_\theta^x(B_t\in A)=\EE \frac{\int \cZ_t(y,0)e^{W(y)}1_{\{x+\theta t +y\in A\}} dy}{\int \cZ_t( y,0)e^{W(y)}  dy}.
\]
Using the time reversal again, we complete the proof of (iii).

(iv) This is similar to the discussion in \eqref{e.9191}. Using the Green's function of the SHE, we can write 
\begin{equation}\label{e.htheta}
h_\theta(t,x)= \log \int_{\R} \cZ_{t}(x,y)e^{W(y)+\theta y}dy.
\end{equation}
By  a straightforward calculation, using the representation
\eqref{e.htheta} and the definition of $p^0_\theta$ in
\eqref{e.defptheta},  we get
\begin{equation}\label{e.varBt}
\partial_\theta^2 h_\theta (t,0)=\int y^2 p^0_\theta(t,y)dy- ( \int yp^0_\theta(t,y)dy)^2,
\end{equation}
and the conclusion of part (iv) is a consequence of the Jensen inequality.
\end{proof}

\section{Upper bound}
\label{s.upbd}
%
%
%

The goal of this section is to show the upper bound
\begin{equation}\label{e.upbd}
\Var\, h_0(t,0) \leq Ct^{2/3}, \quad\quad t\geq1.
\end{equation}

We have the following lemma:
\begin{lemma}\label{l.totalvar}
For any $t>0$, we have \begin{equation}\label{e.351}
\EE \int y^2 \ssp(t,y)dy=t+\EE ( \int y\ssp(t,y)dy)^2
\end{equation}
\end{lemma}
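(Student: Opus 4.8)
The plan is to read \eqref{e.351} off from the convexity computation \eqref{e.varBt} together with the fact, recorded in Proposition~\ref{l.meanhtheta}(i), that the free energy $\theta\mapsto\EE h_\theta(t,0)$ is an exact quadratic. Putting $\theta=0$ in \eqref{e.varBt} and using $p^0_0=\ssp$ gives
\[
\partial_\theta^2 h_\theta(t,0)\Big|_{\theta=0}=\int y^2\ssp(t,y)\,dy-\Big(\int y\,\ssp(t,y)\,dy\Big)^2 ,
\]
so \eqref{e.351} is equivalent to the single identity $\EE\big[\partial_\theta^2 h_\theta(t,0)\big|_{\theta=0}\big]=t$. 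On the other hand, Proposition~\ref{l.meanhtheta}(i) with $x=0$ reads $\EE h_\theta(t,0)=\EE h_0(t,0)+\tfrac12\theta^2 t$, whence $\partial_\theta^2\EE h_\theta(t,0)=t$ for every $\theta$. Thus it remains only to justify exchanging $\partial_\theta^2$ and $\EE$ near $\theta=0$.

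To this end I would fix the realization of $(\xi,W)$ and differentiate \eqref{e.htheta} under the integral sign. For $|\theta|\le1$ one has $e^{-|y|}\le e^{\theta y}\le e^{|y|}$, so $\int\cZ_t(0,y)e^{W(y)+\theta y}\,dy$ stays between $D:=\int\cZ_t(0,y)e^{W(y)-|y|}\,dy$ and $N:=\int\cZ_t(0,y)e^{W(y)+|y|}\,dy$, with $0<D\le N<\infty$ almost surely (finiteness of $N$, and more generally of $\int\cZ_t(0,y)|y|^k e^{W(y)+|y|}\,dy$ for all $k\ge0$, follows from the super-exponential spatial decay of $\cZ_t(0,\cdot)$ exactly as in the proof of Lemma~\ref{l.bdheatk}). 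Hence $\theta\mapsto h_\theta(t,0)$ is a.s.\ $C^\infty$ on $(-1,1)$, and, using \eqref{e.varBt} and Cauchy--Schwarz under the probability $p^0_\theta(t,\cdot)\,dy$,
\[
|\partial_\theta h_\theta(t,0)|\le\Big(\int y^2 p^0_\theta(t,y)\,dy\Big)^{1/2}\le\sqrt{G},\qquad 0\le\partial_\theta^2 h_\theta(t,0)\le\int y^2 p^0_\theta(t,y)\,dy\le G,
\]
for $|\theta|\le1$, where $G:=D^{-1}\int y^2\cZ_t(0,y)e^{W(y)+|y|}\,dy$. Consequently $\sup_{|\theta|\le1}\big(|\partial_\theta h_\theta(t,0)|+|\partial_\theta^2 h_\theta(t,0)|\big)\le G+\sqrt G$. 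If $\EE G<\infty$, the standard differentiation-under-the-expectation criterion (dominated convergence) then shows $\theta\mapsto\EE h_\theta(t,0)$ is $C^2$ on $(-1,1)$ with $\partial_\theta^2\EE h_\theta(t,0)=\EE\,\partial_\theta^2 h_\theta(t,0)$; evaluating at $\theta=0$ and comparing with $\partial_\theta^2\EE h_\theta(t,0)=t$ yields \eqref{e.351}.

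The one nontrivial point, and the only place any real work is needed, is therefore the bound $\EE G<\infty$, which is a rerun of the proof of Lemma~\ref{l.bdheatk}. By Cauchy--Schwarz, $\EE G\le(\EE D^{-2})^{1/2}\,\big(\EE(\int y^2\cZ_t(0,y)e^{W(y)+|y|}\,dy)^2\big)^{1/2}$. For the first factor, Jensen applied to the probability measure $\cZ_t(0,y)\,dy/\int\cZ_t(0,y')\,dy'$ gives $D^{-2}\le(\int\cZ_t(0,y)\,dy)^{-4}\big(\int\cZ_t(0,y)e^{|y|-W(y)}\,dy\big)^2$, after which Cauchy--Schwarz together with the negative moment bound $\EE(\int\cZ_t(0,y)\,dy)^{-p}\le C_{t,p}$ and the estimate $\|\cZ_t(0,y)\|_p\le C_{t,p}e^{-y^2/C_{t,p}}$ (which absorbs $e^{|y|}$ and the factor $\|e^{-W(y)}\|_p=e^{c_p|y|}$, exponential in $|y|$) gives finiteness. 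The second factor is treated the same way: expand the square, apply Hölder, and use the super-exponential decay of $\|\cZ_t(0,y)\|_4$ against the exponential-in-$(|y|+|y'|)$ growth of the Gaussian exponential moments of $W$ (note $W(y)+W(y')$ is Gaussian with variance $O(|y|+|y'|)$). With $\EE G<\infty$ in hand, the rest of the argument is immediate from Proposition~\ref{l.meanhtheta} and \eqref{e.varBt}.
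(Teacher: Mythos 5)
Your proof is correct and follows essentially the same route as the paper: the paper's proof of Lemma~\ref{l.totalvar} is a one-line appeal to \eqref{e.varBt} at $\theta=0$ together with Proposition~\ref{l.meanhtheta}(i). The only difference is that you carefully justify the interchange $\partial_\theta^2\,\EE h_\theta(t,0)=\EE\,\partial_\theta^2 h_\theta(t,0)$ via a dominating integrable bound, a step the paper leaves implicit; your domination argument (convexity, the bound by $G$, and $\EE G<\infty$ via the moment estimates of Lemma~\ref{l.bdheatk}) is sound.
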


\begin{proof}
The result is  a direct consequence of formula \eqref{e.varBt} used
for $\theta=0$ and part (i)   of
Proposition~\ref{l.meanhtheta}. 
\end{proof}

For the polymer endpoint $B_t$, there are two sources of randomnesses:
(i) the random environment $(\xi,W)$; (ii) for each realization of  the random environment, $B_t$ is sampled from the Gibbs measure. Thus,
the equation \eqref{e.351} can be viewed as a total variance formula:
the l.h.s.   is the total variance of $B_t$, $t$ is the expectation of
the quenched variance, and $\EE ( \int y\ssp(t,y)dy)^2$ is the
variance of the quenched expectation (the mean vanishes since
  $y\mapsto\EE\ssp(t,y)$ is even). As the total variance $\EE \int y^2 \ssp(t,y)dy$ is expected to be of order $t^{4/3}\gg t$, we see the main contribution must come from the variance of the quenched mean. This is consistent with the localization behavior of the polymer paths \cite{fisher}.

To estimate $\EE ( \int y\ssp (t,y)dy)^2$, we need

\begin{lemma}\label{l.varquenchedmean}
For any $\delta>0$ and $t>0$, we have 
\begin{equation}
\label{010403-22}
\left\|\int y \ssp(t,y)dy\right\|_2 \leq 4\delta^{-1}\sqrt{\Var\, h_0(t,0)}+2\sqrt{\delta^{-1} t}+\delta t.
\end{equation}
\end{lemma}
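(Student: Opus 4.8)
The plan is to exploit the convexity of $\theta\mapsto h_\theta(t,0)$ together with the exact formula for its mean from Proposition~\ref{l.meanhtheta}(i). Write $m(\theta):=\int y\,p^0_\theta(t,y)\,dy=\partial_\theta h_\theta(t,0)$, so that $m(0)=\int y\,\ssp(t,y)\,dy$ is precisely the quantity we want to control. Convexity of $h_\theta(t,0)$ in $\theta$ (part (iv)) means $m$ is nondecreasing, so for $\delta>0$,
\[
\delta\, m(0)\leq \int_0^\delta m(\theta)\,d\theta = h_\delta(t,0)-h_0(t,0),\qquad
\delta\, m(0)\geq \int_{-\delta}^0 m(\theta)\,d\theta = h_0(t,0)-h_{-\delta}(t,0).
\]
Hence $|m(0)|\leq \delta^{-1}\bigl(|h_\delta(t,0)-h_0(t,0)|+|h_0(t,0)-h_{-\delta}(t,0)|\bigr)$, say; more efficiently one can just use the one-sided bound $m(0)\leq \delta^{-1}(h_\delta(t,0)-h_0(t,0))$ and its mirror image and combine.

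Next I would center these increments against their means. By Proposition~\ref{l.meanhtheta}(i), $\EE[h_\delta(t,0)-h_0(t,0)]=\tfrac12\delta^2 t$, so
\[
h_\delta(t,0)-h_0(t,0) = \bigl(h_\delta(t,0)-\EE h_\delta(t,0)\bigr) - \bigl(h_0(t,0)-\EE h_0(t,0)\bigr) + \tfrac12\delta^2 t.
\]
Taking $L^2(\Omega)$ norms and using the triangle inequality, the fluctuation part is bounded by $\sqrt{\Var h_\delta(t,0)}+\sqrt{\Var h_0(t,0)}$, and by Proposition~\ref{l.meanhtheta}(ii) this is at most $2\sqrt{\Var h_0(t,0)}+\sqrt{\delta t}$. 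Doing the same with $-\delta$ in place of $\delta$ and adding, $\delta\,\|m(0)\|_2$ is bounded by $4\sqrt{\Var h_0(t,0)}+2\sqrt{\delta t}+\delta^2 t$ (the deterministic terms $\tfrac12\delta^2 t$ from each side summing to $\delta^2 t$). Dividing through by $\delta$ gives exactly \eqref{010403-22}.

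The main (and only) subtlety is bookkeeping: getting the constants $4\delta^{-1}$, $2\sqrt{\delta^{-1}t}$, $\delta t$ to come out as stated requires choosing whether to use a symmetric two-sided estimate or two one-sided ones and summing them, and being careful that $\sqrt{\tfrac12\delta^2 t}\leq \sqrt{\delta^2 t}$ etc. There is no analytic difficulty — convexity and the mean formula do all the work — but I would double-check that $m(\theta)$ is genuinely the $\theta$-derivative of $h_\theta(t,0)$ (this is the content of \eqref{e.varBt} and the discussion around \eqref{e.9191}, differentiating the log-moment-generating function $\log\int \ssp(t,y)e^{\theta y}\,dy$ once rather than twice), and that differentiation under the integral/expectation is legitimate, which follows from the Gaussian/exponential moment bounds already invoked in the section (e.g. Lemma~\ref{l.bdheatk} and the moment estimates cited therein). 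One should also note that the fundamental theorem of calculus step $\int_0^\delta m(\theta)\,d\theta = h_\delta(t,0)-h_0(t,0)$ holds pathwise for a.e.\ realization since $h_\theta(t,0)$ is convex hence locally Lipschitz in $\theta$.
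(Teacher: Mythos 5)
Your proposal is correct and follows essentially the same route as the paper: bound $\int y\,\ssp(t,y)\,dy=\partial_\theta h_\theta(t,0)|_{\theta=0}$ by the symmetric difference quotients $\delta^{-1}|h_{\pm\delta}(t,0)-h_0(t,0)|$ via convexity (Proposition~\ref{l.meanhtheta}(iv)), center using the exact mean formula of part (i) to produce the $\delta t$ term, and control the fluctuation terms in $L^2$ with part (ii), yielding exactly the constants $4\delta^{-1}$, $2\sqrt{\delta^{-1}t}$, $\delta t$. The only difference is cosmetic: you derive the convexity inequality via monotonicity of $\partial_\theta h_\theta(t,0)$ and the fundamental theorem of calculus, while the paper invokes it directly.
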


\begin{proof}
First, we recall that 
\[
\int y\ssp(t,y)dy =\partial_\theta h_\theta(t,0)\,\big|_{\theta=0}.
\]
By convexity of $h_{\theta}(t,0)$ in $\theta$  (part (iv) of Proposition~\ref{l.meanhtheta}) for any $\delta>0$  we have
\[
\left|\int y\ssp(t,y)dy\right|\leq \frac{1}{\delta} | h_{\delta}(t,0)- h_0(t,0)|+\frac{1}{\delta} | h_{-\delta}(t,0)- h_0(t,0)|.
\]
We remove the mean on the r.h.s. to further obtain 
\[
\begin{aligned}
\left|\int y \ssp(t,y)dy\right| \leq &  \frac{1}{\delta} | \hat{h}_{\delta}(t,0)- \hat{h}_0(t,0)|+\frac{1}{\delta} | \hat{h}_{-\delta}(t,0)- \hat{h}_0(t,0)|\\
&+\frac{1}{\delta}|\EE h_\delta(t,0)-\EE h_0(t,0)|+\frac{1}{\delta}|\EE h_{-\delta}(t,0)-\EE h_0(t,0)|,
\end{aligned}
\]
where we have denoted $\hat{h}=h-\EE h$. For the second line on the
r.h.s., which is purely deterministic, by part (i) of Proposition~\ref{l.meanhtheta}, we
have $\EE h_{\pm\delta}(t,0)-\EE h_0(t,0)=\frac12\delta^2 t$, which
leads to the upper bound of $\delta^{-1} \delta^2 t=\delta
t$. Applying the triangle inequality we have 
\begin{equation}
\label{020403-22}
\left\|\int y \ssp(t,y)dy\right\|_2 \leq \frac{1}{\delta} \bigg(\|\hat{h}_{\delta}(t,0)\|_2+2\|\hat{h}_{0}(t,0)\|_2+\|\hat{h}_{-\delta}(t,0)\|_2\bigg)+\delta t.
\end{equation}
From   part (ii) of Proposition~\ref{l.meanhtheta}, we have 
\begin{equation}
\label{030403-22}
\|\hat{h}_{\pm\delta}(t,0)\|_2\leq \|\hat{h}_{0}(t,0)\|_2+\sqrt{ \delta t}.
\end{equation}
Putting together  \eqref{020403-22} and \eqref{030403-22} we
  conclude \eqref{010403-22}.
\end{proof}

Now we can complete the proof of the upper bound in Theorem~\ref{t.bqs}:

\begin{proof}[Proof of \eqref{e.upbd}]
Applying Proposition~\ref{p.inte}, Jensen's inequality, and Lemma~\ref{l.totalvar}, we have 
\[
\Var \,h_0(t,0) \leq \sqrt{\EE \int y^2 \ssp (t,y)dy} =\sqrt{t+\EE (\int y\ssp (t,y)dy)^2}.
\]
Further applying Lemma~\ref{l.varquenchedmean}, we derive that
\[
\Var \,h_0(t,0) \leq \sqrt{t}+4\delta^{-1}\sqrt{\Var \, h_0(t,0)}+2\sqrt{\delta^{-1} t}+\delta t
\]
for all $\delta>0$. Choosing $\delta=t^{-1/3}$, we have for $t\geq1$ that 
\[
\Var \, h_0(t,0) \leq C(t^{1/3}\sqrt{\Var \, h_0(t,0)}+t^{2/3}),
\]
where $C>0$ is some universal constant. This is equivalent with 
\[
(\sqrt{\Var \, h_0(t,0)}-\frac12Ct^{1/3})^2 \leq (\frac14C^2+C)t^{2/3}.
\]
Hence the proof is complete.
\end{proof}

\section{Lower bound}
\label{s.lowbd}

Recall that 
$\Var \, h_0(t,0)=\E_0^0|B_t|$, see \eqref {050403-22}. In the present
section 
we shall show that there exists $C>1$, for which
 \begin{equation}\label{e.lowbd}
 \Var \,h_0(t,0)\geq C^{-1}t^{2/3},\quad t\ge1.
 \end{equation}

Let $u>0$ and $\theta>0$ be two constants to be determined later on.
Fix $t>1$ and define 
\begin{equation}\label{e.defn}
n=u+\theta t.
\end{equation}
The idea is to estimate the two probabilities $\Pb_\theta^{0}(B_t>n)$ and $\Pb_\theta^{0}(B_t\leq n)$ separately from above by $t^{-2/3}\Var \, h_0(t,0)$. The first probability can be easily estimated by the Chebyshev inequality:
\begin{lemma}\label{l.p1}
We have 
\[
\Pb_\theta^{0}(B_t>n)\leq \frac{\Var\, h_0(t,0)}{u}.
\]
\end{lemma}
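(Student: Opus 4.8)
The plan is to use the shear-invariance identity from part (iii) of Proposition~\ref{l.meanhtheta} to reduce $\Pb_\theta^0(B_t>n)$ to a statement about the recentered polymer endpoint under $\Pb_0^0$, and then apply Chebyshev together with the variance identity \eqref{050403-22}. Concretely, part (iii) of Proposition~\ref{l.meanhtheta} gives $\Pb_\theta^0(B_t>n)=\Pb_0^0(B_t+\theta t>n)$. Since $n=u+\theta t$ by \eqref{e.defn}, the event on the right is $\{B_t>u\}$, so $\Pb_\theta^0(B_t>n)=\Pb_0^0(B_t>u)$.

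Next I would bound $\Pb_0^0(B_t>u)$. Note that $u>0$ and $B_t$ under $\Pb_0^0$ is centered: indeed $\E_0^0 B_t=\EE\int y\,\ssp(t,y)\,dy=0$ because $y\mapsto\EE\,\ssp(t,y)$ is an even density (this evenness is used, for instance, in the proof of Lemma~\ref{l.inte}). Hence by Chebyshev's inequality applied with the first absolute moment,
\[
\Pb_0^0(B_t>u)\le \Pb_0^0(|B_t|>u)\le \frac{\E_0^0|B_t|}{u}.
\]
Finally, by Proposition~\ref{p.inte} (equation \eqref{050403-22}), $\E_0^0|B_t|=\Var\,h_0(t,0)$, which yields $\Pb_\theta^0(B_t>n)\le \Var\,h_0(t,0)/u$, as claimed.

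There is no real obstacle here — the lemma is genuinely a one-line Chebyshev estimate once the shear identity is invoked. The only point to be slightly careful about is that Chebyshev with a first moment requires $B_t$ to have mean zero under $\Pb_0^0$, which is exactly the evenness of $\EE\,\ssp(t,\cdot)$; alternatively one could bound $\Pb_0^0(B_t>u)\le\Pb_0^0(|B_t|\ge u)$ directly and still land on $\E_0^0|B_t|/u$ without invoking mean zero, since $u>0$. Either route is immediate.
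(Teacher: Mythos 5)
Your proof is correct and follows essentially the same route as the paper: apply part (iii) of Proposition~\ref{l.meanhtheta} with $n=u+\theta t$ to reduce to $\Pb_0^0(B_t>u)$, then bound by $\E_0^0|B_t|/u$ via Markov's inequality and invoke the identity \eqref{050403-22}. The aside about mean zero is unnecessary, as you yourself note, since bounding $\Pb_0^0(B_t>u)\le\E_0^0|B_t|/u$ needs only $u>0$, which is exactly what the paper does.
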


\begin{proof} 
By Proposition~\ref{l.meanhtheta}, we have 
\[
\Pb_\theta^{0}(B_t>n)=\Pb_0^{0}(B_t+\theta t>n)=\Pb_0^0(B_t>u ).
\]
By the Markov inequality we have 
\[
\Pb_0^0(B_t>u) \leq \frac{\E_0^0|B_t|}{u }=\frac{\Var\, h_0(t,0)}{u},
\]
which completes the proof.
\end{proof}

To estimate the other probability, inspired by the argument in \cite{timo3}, we introduce another initial data $\tilde{W}_\theta$ which is a perturbation of $W$ in $[0,n]$:
\begin{equation}\label{e.deftW}
\tilde{W}_\theta(x)=(W(x)+\theta x)\1_{\{x\in[0,n]\}}+(W(x)+\theta n)\1_{\{x>n\}}+W(x)\1_{\{x<0\}}.
\end{equation}
In other words, we add a drift $\theta$ in the interval $[0,n]$.  Define $\tilde{h}_\theta$ as the solution to the KPZ equation started from $\tilde{W}_\theta$ driven by the same noise $\xi$, i.e., 
\[
\tilde{h}_\theta(t,x)= \log \int \cZ_t(x,y)e^{\tilde{W}_\theta(y)}dy. 
\]

Let $\cX$ be a random variable with exponential distribution of
parameter $1$ that is independent of  the random element $(\xi,W)$,
where $\xi$ is the spacetime white noise and $W$ is the two-sided Brownian motion. 

%
%
%

{The idea is to compare $h_\theta(t,0)$ with $\tilde{h}_\theta(t,0)$. By construction, we have  $W_\theta(x)\leq \tilde{W}_\theta (x)$ in the region of $x\leq n$, therefore, in the event of $B_t\leq n$, we do not expect that $h_\theta(t,0)$ to be much larger than $\tilde{h}_\theta(t,0)$. 
The following key lemma   makes the heuristics precise. It corresponds to \cite[Lemma 4.1]{timo3} in the context of ASEP, which was proved through a coupling argument.}

\begin{lemma}\label{l.keyl}
We have
\[
\Pb_\theta^{0}(B_t\leq n) \leq \PP(h_\theta(t,0)-\tilde{h}_\theta(t,0)\leq \cX).
\]
\end{lemma}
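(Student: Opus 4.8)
The plan is to exhibit an explicit probabilistic coupling that realizes the inequality, using the exponential random variable $\cX$ to absorb the discrepancy between the two quenched polymer measures. First I would rewrite both sides in terms of the quenched polymer densities: by definition of $\Pb_\theta^0$ and the quenched density $p_\theta^0$, the left-hand side is $\EE \int_{-\infty}^n p_\theta^0(t,y)\,dy$, while $\int \cZ_t(0,y)e^{\tilde W_\theta(y)}\,dy$ and $\int \cZ_t(0,y)e^{W_\theta(y)}\,dy$ are the partition functions for $\tilde h_\theta(t,0)$ and $h_\theta(t,0)$. The key structural observation is that the two initial profiles agree up to a multiplicative constant on $(-\infty,n]$: from \eqref{e.deftW} we have $\tilde W_\theta(y)=W_\theta(y)$ for $y\in[0,n]$, $\tilde W_\theta(y)=W(y)\ge W_\theta(y)$ for $y<0$ (since $\theta>0$), and $\tilde W_\theta(y)=W(y)+\theta n\ge W(y)+\theta y=W_\theta(y)$ for $y>n$. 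In particular $e^{\tilde W_\theta(y)}\ge e^{W_\theta(y)}$ for all $y$, and on the crucial region $y\le n$ we have the sharper identity $e^{\tilde W_\theta(y)}=e^{W_\theta(y)}$ for $y\in[0,n]$ and $e^{\tilde W_\theta(y)}\ge e^{W_\theta(y)}$ for $y<0$.

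Next, conditionally on the environment $(\xi,W)$, I would compute the conditional probability on the right-hand side by integrating out $\cX$: since $\cX\sim\mathrm{Exp}(1)$ is independent of $(\xi,W)$,
\[
\PP\big(h_\theta(t,0)-\tilde h_\theta(t,0)\le\cX\,\big|\,\xi,W\big)=1\wedge e^{-(h_\theta(t,0)-\tilde h_\theta(t,0))}=1\wedge \frac{\int\cZ_t(0,y)e^{\tilde W_\theta(y)}dy}{\int\cZ_t(0,y)e^{W_\theta(y)}dy},
\]
using $h_\theta(t,0)\ge\tilde h_\theta(t,0)$ (which follows from $e^{\tilde W_\theta}\ge e^{W_\theta}$ and the comparison principle, or just from monotonicity of the integral). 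So the right-hand side equals $\EE\big[\,\int\cZ_t(0,y)e^{\tilde W_\theta(y)}dy\big/\int\cZ_t(0,y)e^{W_\theta(y)}dy\,\big]$. Meanwhile the left-hand side is $\EE\big[\int_{-\infty}^n\cZ_t(0,y)e^{W_\theta(y)}dy\big/\int\cZ_t(0,y)e^{W_\theta(y)}dy\big]$. Hence it suffices to show, \emph{for each fixed environment}, the pointwise inequality
\[
\int_{-\infty}^n\cZ_t(0,y)e^{W_\theta(y)}\,dy\ \le\ \int_{\R}\cZ_t(0,y)e^{\tilde W_\theta(y)}\,dy,
\]
after which taking expectations finishes the proof. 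This pointwise bound is immediate: restrict the right-hand integral to $(-\infty,n]$, where $e^{\tilde W_\theta(y)}\ge e^{W_\theta(y)}$ as observed above (equality on $[0,n]$, $\ge$ on $(-\infty,0)$ because $\tilde W_\theta(y)=W(y)$ and $\theta y<0$ there), and discard the remaining nonnegative contribution from $(n,\infty)$.

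The main subtlety — and the step I would be most careful about — is the interplay between the conditioning on $(\xi,W)$ and the independent randomness $\cX$: one must check that $\cX$ being independent of $(\xi,W)$ lets the conditional-probability computation go through cleanly, and that $h_\theta(t,0)-\tilde h_\theta(t,0)\ge0$ almost surely so that $1\wedge e^{-(\cdot)}=e^{-(\cdot)}$ with no extra truncation. Everything else is the elementary deterministic comparison of the two boundary profiles encoded in \eqref{e.deftW}, driven by the sign of $\theta$; no coupling of ASEP-type second-class particles is needed, the comparison principle for the SHE doing all the work implicitly through the monotonicity of $y\mapsto e^{\tilde W_\theta(y)}-e^{W_\theta(y)}\ge 0$.
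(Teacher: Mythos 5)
Your overall strategy is sound, but one step as written is wrong: the claim that $e^{\tilde W_\theta(y)}\ge e^{W_\theta(y)}$ for \emph{all} $y$, and the resulting assertion $h_\theta(t,0)\ge\tilde h_\theta(t,0)$ a.s. For $y>n$ the definition \eqref{e.deftW} gives $\tilde W_\theta(y)=W(y)+\theta n<W(y)+\theta y=W_\theta(y)$ since $\theta>0$, so the domination holds only on $(-\infty,n]$, not on $(n,\infty)$. (Note also the internal inconsistency: if the global domination $\tilde W_\theta\ge W_\theta$ were true it would yield $\tilde h_\theta\ge h_\theta$, i.e.\ the \emph{opposite} sign to the one you need to drop the truncation.) In fact $h_\theta(t,0)-\tilde h_\theta(t,0)$ has no deterministic sign: it is positive or negative according to whether the dominant contribution to the partition function comes from $\{y>n\}$ or $\{y\le n\}$. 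Consequently your identity ``RHS $=\EE\big[\int\cZ_t(0,y)e^{\tilde W_\theta(y)}dy\big/\int\cZ_t(0,y)e^{W_\theta(y)}dy\big]$'' is false; only the version with the minimum, $\EE\big[1\wedge e^{\tilde h_\theta(t,0)-h_\theta(t,0)}\big]$, is correct.

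The good news is that the error is inessential: you never needed to remove the truncation. For each fixed environment the quenched probability of $\{B_t\le n\}$ satisfies both
\[
\frac{\int_{-\infty}^n \cZ_t(0,y)e^{W_\theta(y)}dy}{\int \cZ_t(0,y)e^{W_\theta(y)}dy}\le 1
\quad\text{and}\quad
\frac{\int_{-\infty}^n \cZ_t(0,y)e^{W_\theta(y)}dy}{\int \cZ_t(0,y)e^{W_\theta(y)}dy}\le \frac{\int_{\R} \cZ_t(0,y)e^{\tilde W_\theta(y)}dy}{\int \cZ_t(0,y)e^{W_\theta(y)}dy},
\]
the second being exactly your (correct) pointwise comparison on $(-\infty,n]$, where $\tilde W_\theta\ge W_\theta$. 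Hence the quenched probability is bounded by $1\wedge e^{-(h_\theta(t,0)-\tilde h_\theta(t,0))}=\PP\big(h_\theta(t,0)-\tilde h_\theta(t,0)\le\cX\,\big|\,\xi,W\big)$, and taking $\EE$ gives the lemma. Once repaired in this way, your conditional-on-the-environment argument is essentially the paper's proof in quenched form: the paper introduces $\X=\int \cZ_t(0,y)e^{W_\theta(y)}\1_{\{y>n\}}dy\big/\int \cZ_t(0,y)e^{W_\theta(y)}\1_{\{y\le n\}}dy$ and proves $h_\theta(t,0)-\tilde h_\theta(t,0)\le\log(1+\X)$ (thereby never needing any claim about the sign of the difference), which is precisely the statement $e^{-(h_\theta-\tilde h_\theta)}\ge(1+\X)^{-1}=$ quenched probability of $\{B_t\le n\}$; its computation with $\int_0^1 dz$ is the unconditional form of your integrating out $\cX$.
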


\begin{proof}
First, we can write
\[
\Pb_\theta^{0}(B_t\leq n)=\EE \frac{\int \cZ_t(0,y)e^{W_\theta(y)}\1_{\{y\leq n\}}dy}{\int \cZ_t(0,y)e^{W_\theta(y)}dy}=\EE \frac{1}{1+\X},
\]
with 
\[
\X:=\frac{\int \cZ_t(0,y)e^{W_\theta(y)}\1_{\{y> n\}}dy}{\int \cZ_t(0,y)e^{W_\theta(y)}\1_{\{y\leq n\}}dy}>0.
\]
On the other hand, we have
\[
\begin{aligned}
\Y&:=h_\theta(t,0) -\tilde{h}_\theta(t,0)=\log \frac{\int \cZ_t(0,y)e^{W_\theta(y)}dy}{\int \cZ_t(0,y)e^{\tilde{W}_\theta(y)}dy}\\
&\leq  \log \frac{\int  \cZ_t(0,y)e^{W_\theta(y)}\1_{\{y>n\}}dy+\int  \cZ_t(0,y)e^{W_\theta(y)}\1_{\{y\leq n\}}dy}{\int  \cZ_t(0,y)e^{\tilde{W}_\theta(y)}\1_{\{y\leq n\}}dy}.
\end{aligned}
\]
By construction, we have $W_\theta (y)\leq \tilde{W}_\theta(y)$ when
$y\leq n$, which implies
that
\begin{align}\label{010103-22}
 \Y\le 
       \log \frac{\int  \cZ_t(0,y)e^{W_\theta(y)}\1_{\{y>n\}}dy+\int
   \cZ_t(0,y)e^{W_\theta(y)}\1_{\{y\leq n\}}dy}{\int
      \cZ_t(0,y)e^{{W}_\theta(y)}\1_{\{y\leq n\}}dy}
     = \log(1+\X). 
\end{align}
This, in turn, implies that 
\[
\begin{aligned}
\EE \frac{1}{1+\X}=&\int_0^1\PP[\X<z^{-1}-1]
dz=\int_0^1\PP[\log(1+\X)<\log z^{-1}] dz\\
\leq & \int_0^1 \PP[\Y<\log z^{-1}]dz.
\end{aligned}
\]
An elementary calculation gives 
\[
\PP[\Y<\cX]=\int_0^\infty \PP[\Y<x]e^{-x}dx=\int_0^1 \PP[\Y<\log z^{-1}] dz,
\]
which completes the proof.
\end{proof}

It remains to estimate $\PP(h_\theta(t,0)-\tilde{h}_\theta(t,0)\leq \cX)$. For any $c_1,c_2,c_3\in\R $ satisfying $c_1=c_2+c_3$, we have 
\begin{equation}\label{e.311}
\begin{aligned}
&\PP(h_\theta(t,0)-\tilde{h}_\theta(t,0)\leq \cX)\\
&\leq  \PP(\{h_\theta(t,0)\leq  c_1\}\cup\{\tilde{h}_\theta(t,0)> c_2\}\cup \{\cX> c_3\})\\
& \leq  \PP(h_\theta(t,0)\leq  c_1)+\PP(\tilde{h}_\theta(t,0)> c_2)+\PP(\cX> c_3).
\end{aligned}
\end{equation}

Through the following lemmas, we estimate each probability from the above display separately.
Since $\cX$ is of exponential distribution with parameter $1$, we have
\begin{lemma}
For any $c_3>0$, we have $\PP(\cX> c_3)= e^{-c_3}$.
\end{lemma}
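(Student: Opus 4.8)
The plan is to invoke nothing beyond the definition of $\cX$ given just above the statement: $\cX$ is a random variable with exponential distribution of parameter $1$, independent of $(\xi,W)$. The independence plays no role for this particular tail bound; only the marginal law of $\cX$ matters.

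First I would recall that the density of $\cX$ is $f(x)=e^{-x}\1_{\{x>0\}}$, so that for any $c_3>0$,
\[
\PP(\cX>c_3)=\int_{c_3}^{\infty}e^{-x}\,dx=e^{-c_3}.
\]
That is the entire argument; there is no obstacle. The only thing worth flagging is that we want the strict-inequality event $\{\cX>c_3\}$, but since the exponential law is absolutely continuous this coincides with $\PP(\cX\ge c_3)$, so the formula is unambiguous. This lemma is recorded only so that the term $\PP(\cX>c_3)$ appearing on the right-hand side of \eqref{e.311} can be made explicit as $e^{-c_3}$ when we later optimize over $c_1,c_2,c_3$; no further structure of $\cX$ is needed here.
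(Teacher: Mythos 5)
Your proposal is correct and matches the paper, which states this as an immediate consequence of $\cX$ being exponential with parameter $1$ and gives no separate argument; the tail integral $\int_{c_3}^\infty e^{-x}\,dx=e^{-c_3}$ is exactly the intended (and only needed) computation.
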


\begin{lemma}
For $c_1<\EE h_0(t,0)+\frac12\theta^2 t$, we have 
\begin{equation}
\label{060402-22}
\PP(h_\theta(t,0)\leq c_1)\leq \frac{\sqrt{\Var\, h_0(t,0)}+\sqrt{\theta t}}{\EE h_0(t,0)+\frac12\theta^2 t-c_1}.
\end{equation}
\end{lemma}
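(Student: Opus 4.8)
The plan is to apply Chebyshev's inequality (in the one-sided form) to the random variable $h_\theta(t,0)$, using the exact value of its mean and the previously established control on its variance. First I would recall from part (i) of Proposition~\ref{l.meanhtheta} that $\EE h_\theta(t,0)=\EE h_0(t,0)+\tfrac12\theta^2 t$, which is exactly the quantity appearing in the denominator of \eqref{060402-22}; this is what makes the hypothesis $c_1<\EE h_\theta(t,0)$ the natural one. Then, writing $m_\theta:=\EE h_\theta(t,0)=\EE h_0(t,0)+\tfrac12\theta^2t$, the event $\{h_\theta(t,0)\le c_1\}$ is contained in $\{m_\theta-h_\theta(t,0)\ge m_\theta-c_1\}$, and since $m_\theta-c_1>0$ by assumption, the Markov inequality applied to the nonnegative part gives
\[
\PP(h_\theta(t,0)\le c_1)\le \PP\big(|h_\theta(t,0)-m_\theta|\ge m_\theta-c_1\big)\le \frac{\EE|h_\theta(t,0)-m_\theta|}{m_\theta-c_1}\le \frac{\sqrt{\Var\,h_\theta(t,0)}}{m_\theta-c_1}.
\]

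The remaining step is to bound $\sqrt{\Var\,h_\theta(t,0)}$ by $\sqrt{\Var\,h_0(t,0)}+\sqrt{\theta t}$, which is precisely part (ii) of Proposition~\ref{l.meanhtheta} (with $\theta>0$ so that $|\theta|=\theta$). Substituting this and the formula for $m_\theta$ into the displayed bound yields exactly \eqref{060402-22}. There is essentially no obstacle here: the lemma is a direct combination of Chebyshev's inequality with the two facts about the mean and variance of $h_\theta(t,0)$ already recorded in Proposition~\ref{l.meanhtheta}. The only point requiring a moment's care is the passage from a two-sided Chebyshev bound to the one-sided statement and the verification that the denominator is positive, which is guaranteed by the hypothesis $c_1<\EE h_0(t,0)+\tfrac12\theta^2 t$; one could equally invoke the sharper one-sided Chebyshev (Cantelli) inequality, but since the paper only needs the stated form, the crude two-sided estimate suffices.
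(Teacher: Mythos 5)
Your proposal is correct and follows essentially the same route as the paper: center $h_\theta(t,0)$ using part (i) of Proposition~\ref{l.meanhtheta}, bound the one-sided deviation by the Markov inequality applied to $|h_\theta(t,0)-\EE h_\theta(t,0)|$, pass to the standard deviation by Jensen, and finish with part (ii). No substantive difference.
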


\begin{proof}
First, we write
\[
\PP(h_\theta(t,0)\leq c_1)=\PP(\hat{h}_\theta(t,0)\leq c_1-\EE h_\theta(t,0)).
\]
where, as we recall $\hat{h}_\theta(t,0) :=h_\theta(t,0)-\EE h_\theta(t,0)$.
By part (i) of Proposition~\ref{l.meanhtheta}, we have $\EE h_\theta(t,0)=\EE
h_0(t,0)+\frac12\theta^2 t$. Under the assumption on $c_1$, we can 
apply the Markov and Jensen inequalities  to conclude that 
\begin{align*}
  &\PP(-\hat{h}_\theta(t,0)\geq \EE  h_\theta(t,0)-c_1)\le \frac{\EE |
    \hat h_\theta(t,0)|}{\EE  h_0(t,0) +\frac12\theta^2 t-c_1} 
  \\
  &
    \le \frac{\sqrt{\Var\, h_\theta(t,0)}}{\EE  h_0(t,0) +\frac12\theta^2 t-c_1}.
\end{align*} 
Furthermore, by part (ii) of Proposition ~\ref{l.meanhtheta},
\[
\frac{\sqrt{\Var\, h_\theta(t,0)}}{\EE h_0(t,0) +\frac12\theta^2 t-c_1} \leq \frac{\sqrt{\Var \, h_0(t,0)}+\sqrt{\theta t}}{\EE h_0(t,0) +\frac12\theta^2 t-c_1},
\]
and \eqref{060402-22} follows.
\end{proof}


\begin{lemma}
Assuming $c_2>\EE h_0(t,0)$, we have 
\[
\PP(\tilde{h}_\theta(t,0)> c_2) \leq e^{\frac12\theta^2 n}\frac{\sqrt{\Var \, h_0(t,0)}}{c_2-\EE h_0(t,0)}. 
\]
\end{lemma}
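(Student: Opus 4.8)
The plan is to recognize that $\tilde W_\theta$ differs from $W$ by an element of the Cameron--Martin space of the two-sided Brownian motion, remove this shift by a change of measure, and thereby bound $\PP(\tilde h_\theta(t,0)>c_2)$ in terms of the law of the \emph{unperturbed} height $h_0(t,0)$ alone.

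First I would rewrite the modified initial datum \eqref{e.deftW} as
\[
\tilde W_\theta(x)=W(x)+\theta\,\ell(x),\qquad \ell(x):=\max\bigl(0,\min(x,n)\bigr),
\]
so that $\ell$ is the continuous function vanishing on $(-\infty,0]$, equal to $x$ on $[0,n]$, and equal to $n$ on $[n,\infty)$. Since $\ell(0)=0$ and $\ell'=\1_{(0,n)}\in L^2(\R)$, the shift $\theta\ell$ belongs to the Cameron--Martin space of $W$, with squared norm $\theta^2 n$ and associated Wiener integral $\W(\theta\ell')=\theta\,\W(\1_{(0,n)})=\theta W(n)$. Consequently, writing $h_0(t,0)=\log\int\cZ_t(0,y)e^{W(y)}dy$ as a functional of $(\xi,W)$ and noting that $\tilde h_\theta(t,0)$ is the same functional evaluated at $W+\theta\ell$, the Cameron--Martin formula applied in the $W$-variable for each fixed $\xi$ (the Radon--Nikodym density $\exp(\theta W(n)-\tfrac12\theta^2 n)$ depends on $W$ only, and $\xi,W$ are independent) yields
\[
\PP\bigl(\tilde h_\theta(t,0)>c_2\bigr)=\EE\Bigl[\1_{\{h_0(t,0)>c_2\}}\exp\bigl(\theta W(n)-\tfrac12\theta^2 n\bigr)\Bigr].
\]

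Next I would apply the Cauchy--Schwarz inequality together with $\EE\exp(2\theta W(n)-\theta^2 n)=e^{\theta^2 n}$ to obtain
\[
\PP\bigl(\tilde h_\theta(t,0)>c_2\bigr)\le e^{\frac12\theta^2 n}\sqrt{\PP\bigl(h_0(t,0)>c_2\bigr)},
\]
and then, since $c_2>\EE h_0(t,0)$, bound $\PP(h_0(t,0)>c_2)=\PP\bigl(\hat h_0(t,0)>c_2-\EE h_0(t,0)\bigr)\le \Var\,h_0(t,0)/(c_2-\EE h_0(t,0))^2$ by Chebyshev's inequality. Substituting this into the previous display gives exactly the asserted bound.

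The only point requiring care is the change of measure: one must check that the perturbation is genuinely a Cameron--Martin direction, so that the shifted and original laws of $W$ are equivalent with an explicit exponential density, rather than merely a bounded perturbation, and then transport the formula through the independent noise $\xi$ — most transparently by conditioning on $\xi$, since $\1_{\{h_0(t,0)>c_2\}}$ is a bounded measurable functional of $(\xi,W)$. After that, the Cauchy--Schwarz step is dictated by the need to match the prefactor $e^{\frac12\theta^2 n}$, and the Chebyshev step is routine; note in particular that $\EE h_\theta$ plays no role here, the whole estimate being reduced to the centered fluctuation $\hat h_0(t,0)$.
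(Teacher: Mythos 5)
Your proposal is correct and follows essentially the same route as the paper: the paper also applies the Girsanov/Cameron--Martin change of measure with density $e^{\theta W(n)-\frac12\theta^2 n}$ to replace $\tilde h_\theta(t,0)$ by $h_0(t,0)$, then uses Cauchy--Schwarz with $\EE e^{2\theta W(n)-\theta^2 n}=e^{\theta^2 n}$ and Chebyshev's inequality. Your additional care in verifying the Cameron--Martin direction and conditioning on $\xi$ simply makes explicit what the paper leaves implicit.
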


\begin{proof}
Recall that $\tilde{h}_\theta$ starts from $\tilde{W}_\theta$ which
only has a positive drift $\theta$ in $[0,n]$, applying the Girsanov theorem, we can write 
\[
\begin{aligned}
\PP(\tilde{h}_\theta(t,0)>c_2)=\EE \1_{\{\tilde{h}_\theta(t,0)> c_2\}}=\EE \1_{\{h_0(t,0)> c_2\}} \mathscr{G},
\end{aligned}
\]
with the Radon-Nikodym derivative $\mathscr{G}=e^{\theta
  W(n)-\frac12\theta^2 n}$. Applying the Cauchy-Schwarz inequality, we have 
\begin{equation}
\label{070403-22}
\PP(\tilde{h}_\theta(t,0)> c_2) \leq \sqrt{\PP(h_0(t,0)> c_2)} \sqrt{\EE \mathscr{G}^2}.
\end{equation}
A direct calculation gives $\EE \mathscr{G}^2= \EE e^{2\theta
  W(n)-\theta^2 n}=e^{\theta^2n}$. For the probability appearing on
the right hand side of \eqref{070403-22},   an
application of the Chebyshev inequality gives 
\[
\PP(h_0(t,0)>c_2) \leq \frac{\Var\, h_0(t,0)}{(c_2-\EE h_0(t,0))^2}, 
\]
which completes the proof.
\end{proof}

To simplify the notation, from now on we denote 
\[
c(t)=\EE h_0(t,0), \quad\quad \psi(t)=\Var\, h_0(t,0).
\]
Combining the above three lemmas, we have
\begin{equation}\label{e.312}
\begin{aligned}
\Pb_\theta^{0}(B_t\leq n)&\leq  \PP(h_\theta(t,0)\leq c_1)+\PP(\tilde{h}_\theta(t,0)> c_2)+\PP(\cX> c_3)\\
&\leq \frac{\sqrt{\psi(t)}+\sqrt{\theta t}}{c(t)+\frac12\theta^2 t-c_1}+e^{\frac12\theta^2 n}\frac{\sqrt{\psi(t)}}{c_2-c(t)} +e^{-c_3},
\end{aligned}
\end{equation}
provided that $c_1=c_2+c_3$ and 
\[
c_1<c(t) +\frac12\theta^2 t, \quad\quad c_2>c(t), \quad\quad c_3>0.
\]

Now we can finish the proof of the lower bound. 

\begin{proof}[Proof of \eqref{e.lowbd}]
Suppose that $M>\lambda>0$ and $\lambda^2>4$. They are  to be further adjusted later on. Let
\[
c_1=c(t)+2t^{1/3}, \quad c_2=c(t)+t^{1/3}, \quad c_3=t^{1/3},  
\]
and
\[
\theta =\lambda t^{-1/3}, \quad n=Mt^{2/3}, \quad u=(M-\lambda)t^{2/3}.
\]
They obviously   satisfy  \eqref{e.defn}.
From Lemma~\ref{l.p1} and \eqref{e.312}, we have
\[
\begin{aligned}
&\Pb_\theta^0(B_t>n)\leq \frac{\psi(t)}{(M-\lambda) t^{2/3}},\\
&\Pb_\theta^0(B_t\leq n)\leq  \frac{1}{\frac12\lambda^2-2} \sqrt{\frac{\psi(t)}{t^{2/3}}}+e^{\frac12\lambda^2M} \sqrt{\frac{\psi(t)}{t^{2/3}}}+\frac{\sqrt{\lambda}}{\frac12\lambda^2 -2}+e^{-t^{1/3}}.
\end{aligned}
\]
Adding the above two inequalities, we obtain
\begin{align}
\label{030303-22}
1\leq 
a \frac{\psi(t)}{  t^{2/3}}+b \sqrt{\frac{\psi(t)}{t^{2/3}}}+\frac{\sqrt{\lambda}}{\frac12\lambda^2 -2}+e^{-t^{1/3}},
\end{align}
where
\begin{align*}
a:=\frac{1}{M-\lambda},\qquad
b:=
  \frac{1}{\frac12\lambda^2-2} 
+e^{\frac12\lambda^2M}.
\end{align*}
Fixing the parameters $\lambda,M$ so that 
$$
M>\lambda, \quad \lambda^2>4\quad\mbox{and}\quad 
\frac{\sqrt{\lambda}}{\frac12\lambda^2 -2}<1,
$$
 we conclude from \eqref{030303-22} that $\liminf_{t\to\infty} \psi(t)t^{-2/3}>0$. 
\end{proof}

\appendix

\section{Proof of \eqref{e.decayco}}

For the convenience of readers, we provide a self-contained proof of the covariance decay result in \eqref{e.decayco}.  Recall that 
\begin{equation}
\label{030903-22}
\cH(t,x)=h_0(t,x)-W(x)= \log \int \cZ_t(x,y)e^{W(y)-W(x)}dy.
\end{equation}
Fix $t>0$, the goal in this section is to show that 
\begin{equation}\label{e.decayco1}
\cov[\cH(t,0),\cH(t,x)]\to0, \quad\quad \mbox{ as } |x|\to\infty.
\end{equation}

There are two independent Gaussian
processes appearing in \eqref{030903-22}: the noise $\xi$ and the two-sided Brownian motion
$W$. Denote by $\E_{\xi}$ and $\E_W$ the expectations on 
$\xi$ and $W$ respectively. Recall that we used $\D$ to denote the Malliavin derivative with
respect to $W'$, and $\EE$ is the total expectation: $\EE=\E_\xi\E_W$. From now on we will use $\mathscr{D}$ to denote the
Malliavin derivative with respect to $\xi$.  We can write
\begin{align}
\label{010903-22}
&\cov[\cH(t,0),\cH(t,x)]  =\EE \Big\{\Big[\cH(t,x)-\E_{\xi} \cH(t,x) \Big]\Big[\cH(t,0)-\E_{\xi} \cH(t,0)\Big]\Big\}\\
&
+
 \E_W\Big\{\Big[ \E_{\xi}
  \cH(t,x)-\EE  \cH(t,x)\Big]\Big[ \E_{\xi} \cH(t,0)-\EE
  \cH(t,0)\Big]\Big\}.\notag
\end{align}
Fix the realization of $W$ and use the
Clark-Ocone formula for the $\xi$ noise   (see  \cite[Proposition
6.3]{CKNP19}),  we can write 
\begin{equation}\label{e.co11}
	\cH(t,x)-\E_{\xi}\cH(t,x)=\int_0^t \int_{\R} \E_{\xi}[ \mathscr{D}_{s,z}\cH(t,x)\mid\F_s]\xi(s,z)dzds,
\end{equation}
where 
$\{\F_s\}_{s\geq0}$ is the natural filtration corresponding to $\xi$.
On the other hand, we can use Clark-Ocone again to express  $\E_{\xi}
  \cH(t,x)-\EE  \cH(t,x)$  and we get
\begin{align}
\label{020903-22}
&
\Big|\E_W\Big\{\Big[ \E_{\xi}
  \cH(t,x)-\EE  \cH(t,x)\Big]\Big[ \E_{\xi} \cH(t,0)-\EE
  \cH(t,0)\Big]\Big\}\Big| \notag\\
&
\le \int \|\D_z \cH(t,0)\|_2 \|\D_z\cH(t,x)\|_2 dz.                                                                     
\end{align}
Using \eqref{e.co11} and \eqref{020903-22}, we can estimate the
expression \eqref{010903-22}
with the help of the Cauchy-Schwarz inequality and get
\[
\begin{aligned}
|\cov[\cH(t,0),\cH(t,x)]| \leq &\int \|\D_z \cH(t,0)\|_2 \|\D_z\cH(t,x)\|_2 dz\\
+&\int_0^t\int \|\cD_{s,z}\cH(t,0)\|_2 \|\cD_{s,z}\cH(t,x)\|_2 dzds=: I_1+I_2.
\end{aligned}
\]

Before estimating $I_1,I_2$, we introduce another notation, the propagator of SHE from $(s,z)$ to $(t,x)$, which is the solution to 
\[
\partial_t \cZ_{t,s}(x,z)=\frac12\Delta_x \cZ_{t,s}(x,z)+\xi(t,x)\cZ_{t,s}(x,z), \quad\quad t>s
\]
and $\cZ_{s,s}(x,z)=\delta(x-z)$. For the propagator, we have the moment estimates \cite[Theorem 2.4, Example 2.10]{chenle}: for any $p\geq1$ and $0\leq s<t<T$, there exists a constant $C=C(p,T)>0$ such that 
\begin{equation}\label{e.mmshe}
\|\cZ_{t,s}(x,y)\|_p \leq C(t-s)^{-1/2}e^{-\frac{(x-y)^2}{C (t-s)}}.
\end{equation}

Throughout the rest of the proof, $C>0$ is some constant that depends only on $t>0$.

(i) Estimates on $I_1$. For any $x,z\in\R$, we have 
\[
\D_z\cH(t,x) =\frac{\int \cZ_t(x,y)e^{W(y)-W(x)} [\1_{\{x<z<y\}}-\1_{\{y<z\leq x\}}]dy}{\int \cZ_t(x,y)e^{W(y)-W(x)}dy}.
\]
By the moment estimate in \eqref{e.mmshe} and a proof that is very similar to the one for Lemma~\ref{l.bdheatk}, we have 
\[
\begin{aligned}
\|\D_z\cH(t,x)\|_2 &\leq  C\int e^{-\frac{(x-y)^2}{Ct}} e^{C|y-x|}[\1_{\{x<z<y\}}+\1_{\{y<z\leq x\}}]dy\\
&\leq  C\int e^{-\frac{(x-y)^2}{Ct}} e^{C|y-x|}\1_{\{|y-x|\geq |z-x|\}} dy\\
&=C\int e^{-\frac{y^2}{Ct}} e^{C|y|}\1_{\{|y|\geq |z-x|\}} dy=:\phi(|x-z|).
\end{aligned}
\]
Then it is straightforward to check that 
\[
I_1\leq \int\phi(|z|)\phi(|x-z|)dz\to0, \quad\quad \mbox{ as } |x|\to\infty.
\]

(ii) Estimates on $I_2$. For the Malliavin derivative with respect to $\xi$, we apply \cite[Theorem 3.2]{CKNP20} to obtain
\[
\cD_{s,z}\cH(t,x)= \frac{ \cZ_{t,s}(x,z)\int\cZ_s(z,y)e^{W(y)-W(x)}dy}{\int \cZ_t(x,y)e^{W(y)-W(x)}dy},
\]
Applying again a  proof that is similar to the one for Lemma~\ref{l.bdheatk}, we have 
\[
\begin{aligned}
\|\cD_{s,z}\cH(t,x)\|_2&\leq  C(t-s)^{-1/2}e^{-\frac{(x-z)^2}{C(t-s)}}\int s^{-1/2}e^{-\frac{(z-y)^2}{Cs}}e^{C|y-x|}dy\\
&\leq  C(t-s)^{-1/2}e^{-\frac{(x-z)^2}{C(t-s)}}e^{C|x-z|}=:\varphi_{t-s}(|x-z|).
\end{aligned}
\]
This implies that 
\[
I_2\leq  \int_0^t\int \varphi_{t-s}(|x-z|)\varphi_{t-s}(|z|)dzds.
\]
From the above expression, it is another straightforward calculation to conclude that $I_2\to0$ as $|x|\to\infty$. This finishes the proof of \eqref{e.decayco1}.

\end{document}